\documentclass[11pt,letterpaper]{amsart}
\usepackage{amsmath,amsfonts}
\usepackage{graphicx}
\usepackage{amssymb}
\usepackage{amsthm}
\usepackage{yfonts}
\usepackage{kpfonts}
\usepackage{tikz-cd}
\usepackage{stmaryrd}
\usepackage{thm-restate}
\usepackage[toc]{appendix}

\usepackage{thmtools}
\usepackage{thm-restate}

\usepackage{hyperref}

\usepackage{mathtools}
\mathtoolsset{showonlyrefs}

\numberwithin{equation}{section}

\usepackage[usenames,dvipsnames]{pstricks}
\usepackage{epsfig}
\usepackage{rotating}
\usepackage{pst-plot}
\usepackage{pst-eps}
\usepackage{pst-grad}
\usepackage{pstricks-add}
\usepackage{lmodern}
\usepackage{xcolor}
\usepackage{graphicx}
\usepackage[top=3cm,bottom=3cm,left=3cm,right=3cm,a4paper]{geometry}

\usepackage{cite}

\newcommand{\qq}{{\mathbb Q}}

\newcommand{\cc}{{\mathbb C}}

\newcommand{\pp}{{\mathbb P}}

\newcommand{\Kod}{{\rm{Kod}}}

\newcommand{\mbar}{\overline{\mathcal{M}}}
\newcommand{\M}[2]{\mathcal{M}_{{#1}, {#2}}}
\newcommand{\Mbar}[2]{\overline{\mathcal{M}}_{{#1}, {#2}}}
\newcommand{\Mbarstack}[2]{\overline{\rm{M}}_{{#1}, {#2}}}

\newcommand{\BD}{\text{boundary divisors}}
\newcommand{\coho}[2]{\ensuremath{\mathrm{H}^{#1}\left(#2\right)}}

\newcommand{\set}[1]{\left\{ #1 \right\}}
\newcommand{\setcondition}[2]{\left\{ #1 \,\middle|\, #2 \right\}}
\newcommand{\restr}[2]{{
  \left.\kern-\nulldelimiterspace
  #1
  \vphantom{\big|}
  \right|_{#2} 
  }}

\newcommand{\lra}{\longrightarrow}


\theoremstyle{plain}
\newtheorem{theorem}{Theorem}[section]
\newtheorem{lemma}[theorem]{Lemma}
\newtheorem{proposition}[theorem]{Proposition}
\newtheorem{corollary}[theorem]{Corollary}

\theoremstyle{definition}
\newtheorem{remark}[theorem]{Remark}

\newtheorem{example}[theorem]{Example}

\setlength{\oddsidemargin}{0.5cm} \setlength{\evensidemargin}{0.5cm}
\setlength{\textwidth}{15.5cm} \setlength{\topmargin}{-0.2cm}
\setlength{\textheight}{21.5cm}
\setlength{\parskip}{.5mm}

\title{The Kodaira classification of the moduli space of pointed curves in genus $3$}
\author[d.P. Ruben]{Ruben de Preter}

\address[d.P. Ruben]{
Department of Mathematics, Universiteit Antwerpen\\
Middelheimlaan 1, 2025 Antwerpen, Belgium} 
\email{ruben.depreter@uantwerpen.be}

\vspace{-1cm}
\begin{document}

\maketitle
\vspace{-0.9cm}
\begin{abstract}
We complete the Kodaira classification of the moduli spaces $\overline{\mathcal{M}}_{g,n}$ of curves with marked points in genus $g=3$, by proving that $\overline{\mathcal{M}}_{3,n}$ is of general type for $n \geq 15$.
We prove that the singularities of $\overline{\mathcal{M}}_{3,n}$ impose no adjunction conditions for $n \geq 1$ and that the canonical class of $\overline{\mathcal{M}}_{3,n}$ is big for $n \geq 15$.\\
{\textit{Mathematics Subject Classification 2020: 14H10, 14E08.}}  
\end{abstract}

\nopagebreak

\section{Introduction}
The birational geometry of moduli spaces of curves $\overline{\mathcal{M}}_{g,n}$ has a rich and fascinating history.
It was discovered by Severi in $1915$ that $\mbar_g$ is unirational for low genus $g \leq 10$ \cite{Sev}. For growing $g$ the moduli space $\mbar_g$ becomes more complex from a birational viewpoint, which became apparent in the 1980's when Eisenbud, Harris and Mumford proved that $\overline{\mathcal{M}}_g$ is of general type for $g \geq 24$, see \cite{EH,Ha,HM}.
Since then, it has been a famous open problem to compute the Kodaira dimension of $\mbar_g$ in the remaining cases, and more generally to compute the Kodaira dimension of $\Mbar{g}{n}$.
A lot of progress has been made towards a full Kodaira classification of $\Mbar{g}{n}$.
For $g$ and $n$ low $\Mbar{g}{n}$ tends to be uniruled, see \cite{AB,CF} and the references therein. In the other direction
Farkas, Jensen, and Payne proved recently that also $\mbar_{22}$ and $\mbar_{23}$ are of general type, see \cite{FJP}.
For each genus $4 \leq g \leq 21$ Logan determined in \cite{Lo} an integer $n(g)$ such that $\Mbar{g}{n}$ is of general type for $n \geq n(g)$. 
The value of $n(g)$ for which this holds has been lowered for several $g$ by Farkas in \cite{Fa1}.
For genus $g \leq 2$ the Kodaira classification of $\Mbar{g}{n}$ is complete, see \cite{BF,BMS}.\\

In genus $g=3$ much less is known. The moduli space $\Mbar{3}{n}$ is rational for $n \leq 14$ \cite{CF}, and these are the only cases in genus $3$ where the Kodaira dimension is known.
The main goal of the current paper is to complete the Kodaira classification in genus $3$:

\begin{theorem}\label{Main theorem}
    The moduli space $\overline{\mathcal{M}}_{3,n}$ is of general type for $n \geq 15$.
\end{theorem}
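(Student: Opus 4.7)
I would follow the standard two-step strategy for proving a singular variety is of general type. First, I would verify that the singularities of $\overline{\mathcal{M}}_{3,n}$ impose no adjunction conditions: every pluri-canonical form defined on the smooth locus extends to a pluri-canonical form on some (equivalently, every) desingularization $\widetilde{M}\to \overline{\mathcal{M}}_{3,n}$, so that $H^0(\widetilde{M}, mK_{\widetilde{M}}) = H^0(\overline{\mathcal{M}}_{3,n}^{\mathrm{sm}}, mK)$ for all $m \geq 1$. Second, I would show that the canonical class $K_{\overline{\mathcal{M}}_{3,n}}$ is big for $n \geq 15$. Bigness together with the extension property implies that $K_{\widetilde{M}}$ is big, which is the definition of $\overline{\mathcal{M}}_{3,n}$ being of general type.

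For the first step, I would work on the \'etale charts $\mathbb{C}^{3g-3+n}/\mathrm{Aut}(C, p_1,\ldots,p_n)$ of the moduli stack and apply the Reid--Tai criterion, which here reduces to checking that for every non-trivial automorphism $\sigma$ of a pointed stable genus $3$ curve, the sum of the eigenvalue ages of $\sigma$ on the tangent space to deformations is at least $1$. The classification of automorphism groups of smooth genus $3$ curves is short and classical, and following Harris--Mumford the delicate cases come from the boundary, in particular the elliptic-tail involution. The presence of at least one marked point should either force it onto the ``main'' component, breaking the tail symmetry, or else enlarge the fixed locus in such a way that the age inequality holds, yielding the claim for every $n \geq 1$.

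For the second step, I would expand $K_{\overline{\mathcal{M}}_{3,n}}$ in the standard Picard basis $\lambda, \psi_1, \ldots, \psi_n, \delta_0, \delta_{h,S}$ via Logan's canonical bundle formula, and then show bigness by exhibiting $K$ as a positive $\mathbb{Q}$-linear combination of effective divisors modulo a (pulled-back) ample class. Since $g=3$ is well below the Brill--Noether threshold, divisors of the Harris--Mumford type are unavailable; instead the natural candidates are divisors associated to the plane-quartic canonical model (bitangents, hyperelliptic locus, Weierstrass loci) extended to $\overline{\mathcal{M}}_{3,n}$, combined with Logan-type divisors of $n$-pointed curves whose marked points impose dependent conditions on a well-chosen linear system on $C$. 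The main obstacle is quantitative: the threshold $n\geq 15$ is sharp since $\overline{\mathcal{M}}_{3,14}$ is rational by \cite{CF}, so the effective divisor must be chosen with very tight coefficients, particularly a small $\delta_0$-coefficient, and the trade-off between the $\sum\psi_i$ contribution and the boundary penalties in the canonical formula must be balanced on the nose.
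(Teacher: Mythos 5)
Your plan for the bigness half (combining the hyperelliptic divisor with a Logan-type marked-point divisor, balancing coefficients against the canonical formula) is essentially the strategy the paper executes: it uses $\pi^*\overline{H}$ together with a symmetrized pull-back of Farkas's divisor $\overline{D}^3_{3,14}$, whose class is indeed of Logan type (marked points imposing dependent conditions on $\omega_C^{\otimes 4}$). That part of the plan is on target.

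The gap is in your treatment of the singularities, and it is not a small one. You propose to show for every $n\geq 1$ that every non-trivial automorphism has age $\geq 1$, reasoning that a marked point should ``break the tail symmetry.'' This fails: when $n\geq 1$ the marked points can all sit on the genus-two component, leaving an elliptic tail $E$ with no marked points and a large automorphism group. If $E$ has $j$-invariant $0$, the order-$6$ automorphism of the tail acts on the deformation space with eigenvalues $\zeta_6,\zeta_6^2,1,\dots,1$, hence has age $1/6+1/3=1/2<1$. So the classical Reid--Tai criterion genuinely fails, and $\overline{\mathcal{M}}_{3,n}$ really does have non-canonical singularities along the locus $J_0$ of elliptic tails of $j$-invariant $0$ (the paper records this as Corollary~\ref{cor:non-canonical locus}). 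This is the same phenomenon that Harris--Mumford already encountered in $\overline{\mathcal{M}}_g$ for $g\geq 4$ and that they dealt with by constructing an explicit partial resolution; your plan has no mechanism to handle it. The paper's actual route is to use a finer Reid--Tai-with-vanishing criterion (Proposition~\ref{Reid-Tai with vanishing}, from \cite{CCM}) to show that an $m$-canonical form lifts provided it vanishes to order $\geq m$ along $\Delta_{1,\emptyset}$, and then to prove via a test-curve intersection (the elliptic-tail curve class $\gamma$) that $4m\Delta_{1,\emptyset}$ is a rigid component of $mK_{\overline{\mathcal{M}}_{3,n}}$, so every global pluri-canonical form automatically satisfies the needed vanishing. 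Without some substitute for this step --- either the CCM-type refinement or a Harris--Mumford-style resolution near $J_0$ --- your argument does not close.
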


Theorem \ref{Main theorem} confirms in genus $3$ the expectation that for $g \geq 2$ there are only finitely many values $(g,n)$ for which $\Mbar{g}{n}$ is not of general type, see  \cite{BMS} and \cite{HM,Lo} for the cases $g=2$ and $g \geq 4$  respectively.\\

There is a standard approach, used already in \cite{HM} and \cite{Lo}, to prove that the moduli space $\Mbar{g}{n}$ is of general type. We will roughly follow this approach, and we outline the key steps below.\\

The first step is proving that the canonical divisor of $\Mbar{g}{n}$ is big, i.e. a sum of an ample and an effective class. This step is intimately related to the computation of classes of effective divisors in the Picard group.
In the current paper we will use effective divisor classes computed in \cite{HM,Fa1} to study the bigness of the canonical class ${K_{\Mbar{3}{n}}}$.\\
As a first step towards Theorem \ref{Main theorem}, we prove in Section \ref{sec:K is big} the following theorem.

\begin{theorem}\label{Main thm: K is big}
    The canonical class $K_{\overline{\mathcal{M}}_{3,n}}$ is big for $n \geq 15$.
\end{theorem}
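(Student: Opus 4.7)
The overall strategy is to express $K_{\Mbar{3}{n}}$ as an explicit non-negative $\qq$-linear combination of classes of known effective divisors plus a class that is visibly big. The starting point is the extension of the Harris--Mumford canonical class formula from \cite{HM} to the pointed setting (compare \cite{Lo}), which writes $K_{\Mbar{3}{n}}$ in the standard basis $\{\lambda, \psi_1,\dots,\psi_n, \delta_{\text{irr}}, \delta_{i,S}\}$ of $\Pic(\Mbar{3}{n}) \otimes \qq$ as
$$K_{\Mbar{3}{n}} = 13 \lambda + \sum_{i=1}^n \psi_i - 2\delta_{\text{irr}} - \sum_{i,S} c_{i,S}\, \delta_{i,S},$$
for explicit non-negative coefficients $c_{i,S}$ (with the largest being $3$ along the elliptic tail locus $\delta_1$).

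The effective divisors I would use as building blocks are of three types. First, the pullback via the forgetful morphism $\pi: \Mbar{3}{n} \lra \mbar_3$ of the hyperelliptic divisor $\mathscr{H}_3$, whose class is $9\lambda - \delta_0 - 3\delta_1$ \cite{HM}; this contributes the $\lambda$-part while absorbing some of the negative coefficients of $\delta_{\text{irr}}$ and $\delta_1$. Second, the pointed divisors of Brill--Noether or Weierstrass type computed by Logan and refined by Farkas \cite{Lo, Fa1}, whose classes carry positive multiples of the $\psi_i$'s; this is the only available source of positive $\psi$-coefficients among geometric effective divisors, and its supply depends crucially on $n$. Third, the boundary divisors themselves, which are trivially effective and can soak up any residual positive boundary contribution.

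The computation then amounts to solving a linear programming problem: find non-negative rationals $a_H$, $a_W$, $\{b_{i,S}\}$ such that
$$K_{\Mbar{3}{n}} - a_H\, \pi^*[\mathscr{H}_3] - a_W\, [W_n] - \sum_{i,S} b_{i,S}\, [\delta_{i,S}]$$
is the class of a visibly big divisor, for instance a small positive multiple of $\sum_i \psi_i$ together with a multiple of a nef class on $\mbar_3$ pulled back. I would first establish this at $n=15$ by an explicit calculation, and then deduce the statement for $n>15$ by pulling back along the forgetful morphism $\Mbar{3}{n} \lra \Mbar{3}{15}$ and using that the fibers contribute effectively via the relative dualizing sheaf (essentially a $\psi_{n+1}$-term plus boundary corrections), so that bigness is preserved.

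The main obstacle will be the tightness at the threshold $n=15$. The coefficients of $\lambda$ and of the various $\psi_i$ must simultaneously turn out non-negative, and the set of pointed effective divisors with known classes is small, which leaves little slack in the linear system. Choosing the right Farkas divisor $[W_n]$ from \cite{Fa1} and carefully balancing the boundary coefficients --- particularly along the elliptic tail divisors $\delta_{1,S}$, whose coefficient $3$ in $K_{\Mbar{3}{n}}$ is the worst obstruction and forces the combination of hyperelliptic and pointed divisors to be quite specific --- will be the crux of the argument, and is precisely what explains why the method breaks down before $n=15$.
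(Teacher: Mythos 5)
Your proposal takes essentially the same route as the paper: write $K_{\Mbar{3}{n}}$ in the standard basis, absorb the $\lambda$-part with the pullback of the hyperelliptic divisor $\overline{H}=9\lambda-\delta_{irr}-3\delta_1$, absorb the $\psi$-part with a pointed effective divisor from Farkas's family $\overline{D}_{g,n}^r$, and observe that the leftover is a non-negative boundary combination plus a small big class $a\lambda+b\psi$. The paper uses precisely $\overline{D}_{3,14}^3 = -73\lambda+4\omega+6\delta_{irr}-\delta_{0,2}-\text{(boundary)}$, symmetrized and pulled back to $\Mbar{3}{n}$; the coefficient of $\delta_{irr}$ in the residue is $(19t-5)/9$ with $t=n/56$, which is positive exactly when $n\geq 15$, matching your diagnosis that $\delta_1$ and $\delta_{irr}$ are the binding constraints.

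The one genuine deviation is your treatment of $n>15$. The paper's choice of the \emph{symmetric} pullback of $\overline{D}_{3,14}^3$ makes the computation uniform in $n$ (the $\omega$-coefficient scales as $56/n$, so the same linear combination works for all $n\geq 15$). You instead propose a base case at $n=15$ and induction via the forgetful map. This does work, but you should state the mechanism more carefully than ``bigness is preserved'': one has $K_{\Mbar{3}{n+1}}=\pi^*K_{\Mbar{3}{n}}+\omega_{n+1}$ where $\omega_{n+1}=\psi_{n+1}-\sum_i\delta_{0,\{i,n+1\}}$ is the pullback of $\psi_1$ from $\Mbar{3}{1}$, and $\pi^*$ of a big class is \emph{not} big because $\pi$ has one-dimensional fibers. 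What saves the argument is that $\omega_{n+1}$ is nef globally and has positive degree $2g-2+n$ on fibers, so for $A$ ample with $K_{\Mbar{3}{n}}-\epsilon A$ effective, the class $\epsilon\pi^*A+\omega_{n+1}$ is nef with positive top self-intersection, hence big. With that lemma in place your induction is sound, and either version completes the proof once the explicit linear algebra at the threshold is carried out.
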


The fact that the canonical class $K_{\Mbar{g}{n}}$ is big, is not yet sufficient to prove that $\Mbar{g}{n}$ is of general type. The problem is that $\Mbar{g}{n}$ is usually singular, whereas the Kodaira dimension is defined as the Iitaka-dimension of the canonical class of a smooth projective model.\\
The second step overcomes this obstruction by proving that the singularities of $\Mbar{g}{n}$ impose no adjunction conditions. This means that any pluri-canonical form defined on the regular locus of $\Mbar{g}{n}$ lifts to a pluri-canonical form on a resolution of singularities $\widetilde{\mathcal{M}}_{g,n} \lra \overline{\mathcal{M}}_{g,n}$.\\

For the second step, Harris and Mumford proved in \cite[Theorem 1]{HM} that the singularities of $\mbar_g$ impose no adjunction conditions for $g \geq 4$.
Logan extended this in \cite[Theorem 2.5]{Lo}, by proving that the singularities of $\Mbar{g}{n}$ impose no adjunction conditions for $g \geq 4$.
In genus $3$, a thorough analysis of the singularities is not yet available. A large part of the current paper is devoted to carrying out such analysis, and the result is:

\begin{theorem}\label{Main thm: lifting forms}
    Suppose $n \geq 1$.
    The singularities of $\Mbar{3}{n}$ impose no adjunction conditions.
    In particular there is an isomorphism
    \[
    {\rm{H}}^0\left(\overline{\mathcal{M}}_{3,n},\mathcal{O}_{\overline{\mathcal{M}}_{3,n}}\left(mK_{\overline{\mathcal{M}}_{3,n}}\right)\right) \simeq {\rm{H}}^0\left(\widetilde{\mathcal{M}}_{3,n},\mathcal{O}_{\widetilde{\mathcal{M}}_{3,n}}\left(mK_{\widetilde{\mathcal{M}}_{3,n}}\right)\right)
    \]
    and $\Kod\left(\Mbar{3}{n}\right) = \kappa\left(\Mbar{3}{n},K_{\Mbar{3}{n}}\right)$.
\end{theorem}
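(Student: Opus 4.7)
The strategy is to adapt the Harris-Mumford/Logan framework to the automorphism theory of genus-$3$ pointed curves. Since the Deligne-Mumford stack $\overline{M}_{3,n}$ is smooth, the coarse space $\Mbar{3}{n}$ is étale-locally a quotient $\mathrm{Def}(C, p_\bullet)/\Aut(C, p_\bullet)$ at each stable pointed curve. Combining the Reid-Tai criterion with Chevalley-Shephard-Todd reduces the problem to the following check: for every stable $(C, p_\bullet)$ and every cyclic subgroup $\langle \sigma \rangle \leq \Aut(C, p_\bullet)$ of order $m$ with $\sigma$ not a pseudo-reflection, the eigenvalues $\zeta^{a_1}, \ldots, \zeta^{a_N}$ of $\sigma$ on $\mathrm{Def}(C, p_\bullet) \cong H^1(C, T_C(-\sum p_i))$ satisfy $\sum_i \{ka_i/m\} \geq 1$ for every $k$ coprime to $m$. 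If this holds on every cyclic subgroup, then $\Mbar{3}{n}$ has canonical singularities and the claimed isomorphism of pluricanonical sections follows.

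The plan is thus to enumerate the pointed stable curves with non-trivial automorphisms, which split into interior and boundary contributions. In the interior, an extra automorphism forces either $C$ to be hyperelliptic with every $p_i$ at a Weierstrass point, or $C$ to be a smooth plane quartic in one of the finitely many Henn classes with the automorphism fixing all markings. For the hyperelliptic case, the short exact sequence
\[
0 \to T_C\bigl(-\sum p_i\bigr) \to T_C \to \bigoplus_i T_{p_i} C \to 0
\]
shows that the hyperelliptic involution $\iota$ acts trivially on the $3g-3=6$ complex-structure deformations $H^1(C, T_C)$ (since it acts trivially on $\omega_C^{\otimes 2}$) and as $-1$ on each $T_{p_i}C$; the Reid-Tai sum is $n/2$, which is at least $1$ for $n \geq 2$, while $n=1$ gives a pseudo-reflection. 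The plane-quartic cases are handled by decomposing $H^0(C, \omega_C^{\otimes 2})$ and the tangent spaces $T_{p_i}C$ into character spaces of the acting group. On the boundary, a stable curve with a non-trivial automorphism fixing all markings is built from components carrying automorphisms that fix both their nodes and their markings: elliptic tails, genus-$2$ hyperelliptic tails attached at a Weierstrass point, and combinations thereof. The eigenvalue computation for each such configuration splits across component deformations, node-smoothing tensors $T_qC_1 \otimes T_qC_2$, and marked-point positions.

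The cases historically responsible for extra work all involve the elliptic tail divisor $\Delta_1$. The elliptic tail involution $[-1]_E \cup \mathrm{id}_{C_0}$ acts trivially on the moduli of $E$ (a direct computation via $H^0(E, \mathcal{O}(p))^\vee$) and produces a single $-1$ eigenvalue, on the node-smoothing direction $T_pC_0 \otimes T_pE$; it is therefore a pseudo-reflection, the coarse space is locally smooth there, and no difficulty arises from this generator alone. The delicate step is rather the analysis of \emph{compositions} of elliptic tail involutions with other automorphisms on deeper strata, since these compositions are genuine non-pseudo-reflections whose Reid-Tai sums must be verified. I expect every such composition to pick up enough $-1$ eigenvalues from extra node-smoothings or from Weierstrass-direction deformations to satisfy Reid-Tai, mirroring the pattern established in $g \geq 4$. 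The main technical work of the proof will therefore be to enumerate these compositions systematically, check their eigenvalues exhaustively, and confirm completeness; once every non-pseudo-reflection cyclic subgroup satisfies the Reid-Tai bound, the singularities of $\Mbar{3}{n}$ are canonical and the stated section isomorphism and the equality $\Kod(\Mbar{3}{n}) = \kappa(\Mbar{3}{n}, K_{\Mbar{3}{n}})$ follow.
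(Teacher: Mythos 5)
There is a genuine gap that would stop this argument cold. You propose to show that $\Mbar{3}{n}$ has canonical singularities by verifying the Reid--Tai inequality $\sum_i \{ka_i/m\} \geq 1$ on every non-pseudo-reflection cyclic subgroup, and you expect ``every such composition to pick up enough $-1$ eigenvalues \ldots to satisfy Reid--Tai, mirroring the pattern established in $g \geq 4$.'' This expectation is false, both for $g \geq 4$ and for $g = 3$. Already in Harris--Mumford's original work, $\mbar_g$ for $g \geq 4$ does \emph{not} have canonical singularities: it fails the Reid--Tai criterion along the locus of curves with an elliptic tail of $j$-invariant $0$, and they had to construct an explicit resolution near that locus to prove the non-adjunction statement. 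The same phenomenon persists in genus $3$ with marked points, and the paper's Corollary \ref{cor:non-canonical locus} pins this down: the non-canonical locus of $\Mbar{3}{n}$ is exactly $J_0$, the locus of curves with a $j = 0$ elliptic tail without marked points. Concretely, the order-$6$ automorphism of such a tail acts on the deformation space with eigenvalues $\zeta_6, \zeta_6^2, 1, \ldots, 1$; its cube is a quasi-reflection, and after quotienting by the quasi-reflections the residual generator has eigenvalues $\zeta_3, \zeta_3, 1, \ldots, 1$ with age $2/3 < 1$. This is a junior element, so the singularity is not canonical and your planned verification fails.

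The paper therefore has to supply something beyond pure Reid--Tai, and it does so differently from Harris--Mumford's explicit blowup. It invokes a refined criterion due to Chen--Costantini--M\"oller (Proposition \ref{Reid-Tai with vanishing}) which allows an $m$-canonical form to lift to a resolution even through a non-canonical singularity, provided the form vanishes to order $\geq m$ along a suitable coordinate divisor. This yields Proposition \ref{prop:condition to lift forms}: any $m$-canonical form on $\Mbar{3}{n}$ vanishing to order $\geq m$ along $\Delta_{1,\emptyset}$ lifts. Separately, a test-curve computation (Lemma \ref{lemma:intersection numbers elliptic tail curves} and Proposition \ref{prop:rigid component}) shows that $4m\Delta_{1,\emptyset}$ is a rigid component of $mK_{\Mbar{3}{n}}$, so every global $m$-canonical form automatically vanishes to order $\geq 4m \geq m$ along $\Delta_{1,\emptyset}$. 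The two pieces together give Theorem \ref{Main thm: lifting forms}. Your eigenvalue analysis on the remaining strata is in the right spirit (and much of it reappears in Lemma \ref{lemma:age on smooth pointed curve} and Proposition \ref{prop:analysis of eigenvalues}), but the proposal is missing the central mechanism --- controlling how bad the unavoidable non-canonical singularities are, and showing that all global pluricanonical forms happen to satisfy the extra vanishing condition needed to pass through them.
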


Theorem \ref{Main theorem} follows from Theorem \ref{Main thm: K is big} and Theorem \ref{Main thm: lifting forms}.\\

The proof of Theorem \ref{Main thm: lifting forms} is based on a detailed understanding of the singularities of $\Mbar{g}{n}$. 
Singularities of the moduli space $\Mbar{g}{n}$ occur due to non-trivial autormorphisms on the underlying pointed curves being parameterized. This results in finite quotient singularities on $\Mbar{g}{n}$.
The Reid-Shepherd-Barron-Tai criterion \ref{Reid-Tai OG} provides a way to control the locus of non-canonical singularities for varieties with finite quoteint singularities, and we will use the criterion to classify the locus of non-canonical singularities in Corollary \ref{cor:non-canonical locus}.
The classical version of the criterion gives no information about \textit{how bad} the non-canonical singularities are, if there are any.
In Subsection \ref{subsec:local description Mgn} we describe several variants of the Reid-Tai criterion, which allow us to prove that some pluri-canonical forms lift to a resolution of singularities, even around the non-canonical locus. The result is:

\begin{proposition}\label{prop:condition to lift forms}
    Let $n \geq 1$ and let $\eta$ be a global $m$-canonical form on $\Mbar{3}{n}$. Suppose that the order of vanishing of $\eta$ along the divisor $\Delta_{1,\emptyset}$ is at least $m$. Then $\eta$ lifts to a resolution of singularities of $\Mbar{3}{n}$.
\end{proposition}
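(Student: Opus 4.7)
The plan is to reduce the lifting problem to a local computation at points of the non-canonical locus and then invoke the refined Reid--Tai criterion from Subsection~\ref{subsec:local description Mgn}. Away from the non-canonical locus the singularities of $\overline{\mathcal{M}}_{3,n}$ are canonical, so any pluri-canonical form extends to a resolution automatically. By Corollary~\ref{cor:non-canonical locus} the non-canonical locus is contained in the boundary divisor $\Delta_{1,\emptyset}$, so only points $[C] \in \Delta_{1,\emptyset}$ whose stabiliser violates the classical Reid--Tai inequality need to be considered.

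At such a point, étale-locally $\overline{\mathcal{M}}_{3,n} \simeq \cc^N/\Aut(C)$, where $\Aut(C)$ acts linearly on the deformation space of $(C,p_1,\ldots,p_n)$. I would diagonalise the action and single out the coordinate $t$ that smooths the node joining the (unmarked) elliptic tail to the genus two component, so that the branch of $\Delta_{1,\emptyset}$ through $[C]$ is cut out by $t=0$. The elliptic-tail involution acts as $-1$ on $t$, and the non-canonical automorphisms identified in Corollary~\ref{cor:non-canonical locus} all involve this involution as a factor; consequently, the eigenvalue of each non-canonical $\sigma$ on $t$ contributes $\tfrac{1}{2}$ to its Reid--Tai sum, which is by assumption strictly less than $1$.

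Writing $\eta$ on the smooth cover as $f \cdot (dz_1 \wedge \cdots \wedge dz_N)^{\otimes m}$, the hypothesis $\mathrm{ord}_{\Delta_{1,\emptyset}}(\eta) \geq m$ forces $t^m \mid f$. Plugging this extra factor of $t^m$ into the refined Reid--Tai inequality of Subsection~\ref{subsec:local description Mgn} supplies an additional contribution of $m\cdot\tfrac{1}{2}$ to the weighted sum for each non-canonical $\sigma$. I would check that this contribution raises the total above the threshold needed for $f$ to descend to a regular section of $mK$ on a $\Aut(C)$-equivariant (equivalently, toroidal) resolution of $\cc^N/\Aut(C)$, so that $\eta$ lifts locally, and hence globally, to a resolution $\widetilde{\mathcal{M}}_{3,n}\to\overline{\mathcal{M}}_{3,n}$.

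The main obstacle is the case-by-case verification at the deeper strata of $\Delta_{1,\emptyset}$, where additional symmetries — hyperelliptic involutions of the genus two component, several elliptic tails, or further nodal degenerations — enlarge $\Aut(C)$ and worsen the Reid--Tai deficit of composite elements. The saving observation is that at these strata $\Delta_{1,\emptyset}$ either acquires higher local multiplicity or meets further branches on which analogous vanishing is automatic, so the single hypothesis on $\eta$ supplies proportionally more vanishing and continues to dominate every deficit. Assembling these local computations over the finite list of non-canonical automorphism types produced by Corollary~\ref{cor:non-canonical locus} completes the proof.
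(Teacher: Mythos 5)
Your overall direction---localize at the non-canonical locus, then trade the extra vanishing along $\Delta_{1,\emptyset}$ against the Reid--Tai deficit via the refined criterion---is indeed the paper's strategy. But there are several concrete gaps.

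First, a circularity: Corollary~\ref{cor:non-canonical locus} is proved in the paper \emph{as a consequence} of Proposition~\ref{prop:lifting forms local version}, which is what you are trying to establish. The correct input is Proposition~\ref{curves admitting a junior automorphism} (junior automorphisms occur only over $\Delta_{1,\emptyset}$), not the corollary.

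Second, your arithmetic is wrong at the decisive step. The junior automorphisms causing the non-canonical singularities come from an order-$6$ automorphism of a $j=0$ elliptic tail, acting on the smoothing parameter $t$ with eigenvalue $\pm\zeta_6$, not $-1$. The resulting contribution of $t$ to the Reid--Tai sum is $\tfrac{1}{6}$ or $\tfrac{5}{6}$, not $\tfrac{1}{2}$. Moreover, the cyclic group generated by such an element contains a genuine quasi-reflection ($g^3 = \operatorname{diag}(-1,1,\dots,1)$), so Proposition~\ref{Reid-Tai with vanishing} is not directly applicable; one must first pass to the quotient by the subgroup of quasi-reflections, as the paper does explicitly in Example~\ref{ex:j=0}, and only then apply the inequality~\eqref{eq:inequality for lifting forms} (with $b_1 \geq m$) to the induced action of $G/H$. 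Without this reduction your ``plug $t^m$ into the refined Reid--Tai inequality'' step is formally invalid.

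Third, the paragraph on deeper strata does not constitute an argument. The paper handles curves with two isomorphic tails or with extra automorphisms on the genus-two side by showing those loci have codimension $\geq 3$, then applying the Harris--Mumford lifting criterion (Proposition~\ref{prop:lifting forms HM version}): a junior automorphism has fixed locus of codimension $\leq 2$ (this uses the eigenvalue bound from Proposition~\ref{prop:analysis of eigenvalues}), which therefore meets the already-handled open set, forcing the form to lift at the deeper point too. Your claim that $\Delta_{1,\emptyset}$ ``acquires higher local multiplicity'' supplying ``proportionally more vanishing'' is not justified and is not what actually happens; the mechanism is a codimension/fixed-locus argument, not a multiplicity argument. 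As written, the proposal therefore has a genuine gap at exactly the place you flag as ``the main obstacle.''
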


In Section \ref{sec:rigid component}, we use a test curve to identify a rigid components of the canonical divisor:

\begin{proposition}\label{prop:rigid component}
    For $n \geq 1$, the pluri-canonical divisor $mK_{\Mbar{3}{n}}$ has rigid component $4m\Delta_{1,\emptyset}$. In particular, every global $m$-canonical form on $\Mbar{3}{n}$ vanishes along $\Delta_{1,\emptyset}$ to order at least $4m$.
\end{proposition}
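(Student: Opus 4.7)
The plan is to exhibit a covering family of test curves $B\subset\Delta_{1,\emptyset}$ in $\Mbar{3}{n}$ for which $K_{\Mbar{3}{n}}\cdot B = 4(\Delta_{1,\emptyset}\cdot B)$, with both intersection numbers negative. Granted such a $B$, the proposition follows from the standard rigid-component argument. Decompose any effective $D\sim mK_{\Mbar{3}{n}}$ as $D=a\Delta_{1,\emptyset}+D'$ with $\Delta_{1,\emptyset}\not\subset\mathrm{Supp}\,D'$. Because the family of test curves covers $\Delta_{1,\emptyset}$, one can choose $B$ not contained in $\mathrm{Supp}\,D'$, so $D'\cdot B\geq 0$, and the identity $D\cdot B=mK\cdot B=4m(\Delta_{1,\emptyset}\cdot B)$ forces $a(\Delta_{1,\emptyset}\cdot B)\leq 4m(\Delta_{1,\emptyset}\cdot B)$. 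Dividing by the negative number $\Delta_{1,\emptyset}\cdot B$ reverses the inequality and yields $a\geq 4m$.

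The natural test curve is the pencil of elliptic tails: fix a general stable $(n+1)$-pointed genus $2$ curve $(C',p,x_1,\ldots,x_n)$ and let $B\subset\Mbar{3}{n}$ be the one-parameter family obtained by varying $(E,q)\in\mbar_{1,1}$ and identifying $q$ with $p$. Then $B\cong\mbar_{1,1}$ is contained in $\Delta_{1,\emptyset}$, and as $(C',p,\vec{x})$ varies over $\mbar_{2,n+1}$ the curves $B$ sweep out an $(n+4)$-dimensional family covering $\Delta_{1,\emptyset}$.

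For the intersection numbers, the markings and $C'$ stay fixed along $B$, so $\psi_i\cdot B=0$ for every $i$ and $\Delta_{i,S}\cdot B=0$ for every boundary class with $(i,S)\neq(1,\emptyset)$; for generic $(C',p,\vec{x})$ the curve $B$ also avoids the hyperelliptic locus. The Hodge class $\lambda$ and the irreducible boundary class $\delta_{irr}$ restrict to the familiar classes on $\mbar_{1,1}$, and $\Delta_{1,\emptyset}\cdot B$ is the degree of the normal bundle of $\Delta_{1,\emptyset}$ along $B$, governed by $-\psi$ on the elliptic side. Substituting these into the Harris--Mumford-type formula for $K_{\Mbar{3}{n}}$ on the coarse moduli produces the asserted identity $K_{\Mbar{3}{n}}\cdot B = 4(\Delta_{1,\emptyset}\cdot B)$.

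The principal obstacle is that the computation must be performed on the coarse moduli space rather than on the stack, which requires careful bookkeeping of the elliptic involution. This generic $\mathbb{Z}/2$-automorphism along $\Delta_{1,\emptyset}$ is responsible both for the extra correction to the coefficient of $\Delta_{1,\emptyset}$ in $K_{\Mbar{3}{n}}$ (beyond the naive stacky formula) and for the discrepancy between the reduced divisor $\Delta_{1,\emptyset}$ on the coarse moduli and the class of its preimage on the stack, which affects the normal bundle computation. Handling these two contributions consistently is what converts the naive stack-level ratio into the sharp coarse ratio $4$ required by the proposition.
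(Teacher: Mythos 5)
Your proposal is correct and takes essentially the same route as the paper: a test curve obtained by attaching a varying elliptic tail $[E,q]\in\mbar_{1,1}$ to a fixed $(n+1)$-pointed genus-$2$ curve, computing intersection numbers via the standard pull-back formulas along $\theta:\mbar_{1,1}\to\Mbar{3}{n}$, and applying the rigid-component lemma. The paper carries the arithmetic out explicitly (in the $\kappa_1$-basis, obtaining $\gamma\cdot mK_{\Mbar{3}{n}}=-\tfrac{2m}{3}$ and $\gamma\cdot\Delta_{1,\emptyset}=-\tfrac{1}{6}$, hence the ratio $4m$), whereas you leave the final identity as an assertion, but the method and the stack-versus-coarse bookkeeping you flag are exactly what the paper uses.
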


Theorem \ref{Main thm: lifting forms} follows from Proposition \ref{prop:condition to lift forms} and Proposition \ref{prop:rigid component}.

\begin{remark}
    In the seminal paper \cite{HM}, Harris and Mumford explicitly construct a resolution of singularities in a neighborhood of the locus of non-canonical singularities of $\mbar_{g}$ for $g \geq 4$, and  use this resolution to prove that the singularities impose no adjunction condition.
    This is the main part where our argument differs from the original argument.
    We use a finer version of the Reid-Tai criterion, due to \cite{CCM}, to control the adjunction conditions imposed by the non-canonical singularities. Then we use a test curve to identify a rigid component, whereby the adjunction conditions are satisfied by all global pluri-canonical forms.
\end{remark}

\subsection*{Acknowledgements}
I thank Ignacio Barros for introducing me to this problem and I am gratefull to Scott Mullane for his helpful suggestions and comments while writing the paper.
My research has been supported by Research Foundation - Flanders (FWO) within the framework of the Odysseus program project number G0D9323N.

\section{Preliminaries}
\subsection{Divisors on $\Mbar{g}{n}$}\label{sec:divisors}
The moduli space $\overline{\mathcal{M}}_{g,n}$ has finite quotient singularities, and hence is normal and $\mathbb{Q}$-factorial. In particular, we identify $\mathbb{Q}$-line bundles and $\mathbb{Q}$-Weil divisor classes on $\overline{\mathcal{M}}_{g,n}$.
The morphism $\Mbarstack{g}{n} \lra \Mbar{g}{n}$, from the moduli stack of curves to the coarse moduli space, induces a map between Picard groups
\[
\operatorname{Pic}\left(\Mbar{g}{n}\right) \lra \operatorname{Pic}\left(\Mbarstack{g}{n}\right),
\]
which becomes an isomorphism after base change to $\mathbb{Q}$:
\[
\operatorname{Pic}_\mathbb{Q}\left(\Mbar{g}{n}\right) \simeq \operatorname{Pic}_\mathbb{Q}\left(\Mbarstack{g}{n}\right).
\]
Therefore we will identify $\mathbb{Q}$-line bundles on $\Mbar{g}{n}$ and $\Mbarstack{g}{n}$.\\
The divisor $\Delta_{irr} \subseteq \overline{\mathcal{M}}_{g,n}$ is defined as the locus of pointed curves obtained by gluing together two points on a stable $(n+2)$-pointed curve of genus $g-1$ to a node. The general point in $\Delta_{irr}$ is irreducible, hence the notation.
Let $i \in \{0,1,...,g\}$ and let $S \subseteq \set{1,...,n}$. The divisor $\Delta_{i,S} \subseteq \overline{\mathcal{M}}_{g,n}$ is defined as the locus of pointed curves with a separating node that separates the curve into two components of genus $i$ and $g-i$, such that the marked points on the genus $i$ components are exactly those indexed by $S$.
For this to be well defined we have to assume that $S$ has at least $2$ elements if $i=0$ and at most $n-2$ elements if $i=g$.
The union of the divisors $\Delta_{irr}$ and $\Delta_{i,S}$ is the boundary $\Mbar{g}{n}\backslash \M{g}{n}$.
Note that
\[
\Delta_{i,S} = \Delta_{g-i,\set{1,...,n}\backslash S}.
\]
We use the notations $\delta_{irr} = [\Delta_{irr}]$ and
\[
\delta_{i,S} = \begin{cases}
    \frac{\left[\Delta_{i,S}\right]}{2} \text{ if } (i,S) = (1,\emptyset),\\
    [\Delta_{i,S}] \text{ otherwise}
\end{cases}
\]
to denote the classes in the Picard group of the boundary divisors on the moduli stack $\Mbarstack{g}{n}$.
The reason for dividing by $2$ in the definition of $\delta_{1,\emptyset}$ is that the general curve in $\Delta_{1,\emptyset}$ has $2$ automorphisms.
We denote
\[
\delta = \delta_{irr} + \sum_{i=0}^{\lfloor \frac{g}{2} \rfloor} \delta_{i},\,\,\,\, \delta_i = \sum_{S \subseteq \set{1,...,n}} \delta_{i,S},
\]
where the latter sum is indexed by all subsets $S \subseteq \set{1,...,n}$ that satisfy $|S| \geq 2$ if $i=0$, and $n \not\in S$ if $i = \frac{g}{2}$ (this last conditions makes sure that the divisor $\delta_{\frac{g}{2},S}=\delta_{\frac{g}{2},\set{1,...,n}\backslash S}$ appears only once in the sum).

Let $\rho : \mathcal{X} \lra \Mbarstack{g}{n}$ be the universal family with markings $\sigma_1,...,\sigma_n : \Mbarstack{g}{n} \lra \mathcal{X}$, and let $\omega_\rho$ be the relative dualizing sheaf of $\rho$.
The Hodge class $\lambda$ and the $\psi$-classes $\psi_1,...,\psi_n$ are defined as
\[
\lambda = c_1(\rho_*\omega_\rho) \text{ and } \psi_j = c_1\left(\sigma_j^*(\omega_\rho)\right) \in \operatorname{Pic}_\mathbb{Q}\left(\Mbar{g}{n}\right)
\]

\begin{theorem}[{{\cite[Proposition 1]{AC}}}]
Suppose that $g \geq 3$. The rational Picard group $\operatorname{Pic}_\mathbb{Q}(\overline{\mathcal{M}}_{g,n})$ has a basis given by the classes $\lambda,\psi_1,...,\psi_n,\delta_{irr}$ and $\set{\delta_{i,S}}_{i,S}$.
\end{theorem}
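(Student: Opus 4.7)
The plan is to reduce the statement to two independent claims about the stack $\Mbarstack{g}{n}$ (whose rational Picard group agrees with that of $\Mbar{g}{n}$, as recorded above): first, a \emph{generation} claim for the open moduli $\M{g}{n}$, and second, a \emph{linear independence} claim for the boundary classes together with the interior generators. Since $\Mbarstack{g}{n}$ is smooth and the boundary is a union of divisors, the localization sequence for Weil divisors gives the right-exact sequence
\[
\bigoplus_{\Delta} \mathbb{Q}\cdot[\Delta] \longrightarrow \operatorname{Pic}_\mathbb{Q}\!\left(\Mbarstack{g}{n}\right) \longrightarrow \operatorname{Pic}_\mathbb{Q}\!\left(\M{g}{n}\right) \longrightarrow 0,
\]
where $\Delta$ ranges over $\Delta_{irr}$ and the $\Delta_{i,S}$. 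It therefore suffices to show that the interior rational Picard group is spanned by $\lambda,\psi_1,\ldots,\psi_n$ and that after adjoining the boundary classes one obtains a linearly independent set of the predicted rank.

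For the interior claim, I would invoke Harer's theorem: for $g\geq 3$, the rational Picard group of $\mathcal{M}_g$ is one-dimensional, generated by $\lambda$. To pass from $\mathcal{M}_g$ to $\M{g}{n}$ I would induct on $n$ using the forgetful morphism $\pi:\M{g}{n+1}\to\M{g}{n}$, whose fibres are smooth pointed curves (the universal curve with the $n$ tautological sections removed). A Leray-type analysis of $\pi$ identifies the relative Picard contribution with $\psi_{n+1}$, up to pullbacks and multiples of the previous $\psi_j$ arising from the deleted sections; the inductive step therefore enlarges the interior basis by exactly one class $\psi_{n+1}$.

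For linear independence, I would exhibit, for each proposed basis element $\alpha$, a complete test curve $B_\alpha \to \Mbarstack{g}{n}$ with $B_\alpha\cdot\alpha\neq 0$ and $B_\alpha\cdot\beta=0$ for every other proposed basis element $\beta$. Classical families suffice: a pencil of plane curves or a Kodaira fibration detects $\lambda$ and separates it from $\delta_{irr}$; moving a single section along a fixed smooth pointed curve detects each individual $\psi_i$; and for each boundary divisor $\Delta_{i,S}$, a one-parameter family obtained by varying the moduli of one component while keeping the other component (and the gluing node) fixed pairs non-trivially only with $\delta_{i,S}$. Assembling these yields a block-triangular intersection matrix with nonzero diagonal, which forces linear independence.

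The main obstacle is the interior generation step: the rank-one conclusion for $\operatorname{Pic}_\mathbb{Q}(\mathcal{M}_g)$ rests on Harer's vanishing/stability results, which is the genuinely deep topological input. Once Harer's theorem is taken as a black box, the remainder of the argument is systematic: boundary combinatorics in the excision sequence, an induction on $n$ via forgetful maps, and explicit test-curve intersection numbers to pin down linear independence.
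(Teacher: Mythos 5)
The paper offers no proof of this statement; it is cited directly from Arbarello--Cornalba \cite{AC}, so there is no internal argument to compare against. Judged on its own terms, your outline correctly identifies the skeleton of the Arbarello--Cornalba proof: Harer's theorem as the deep topological input, excision over the boundary to reduce to the interior, and test-family intersection numbers for linear independence. The soft spot is the interior induction via the forgetful map $\pi\colon\M{g}{n+1}\to\M{g}{n}$. The fibre of $\pi$ is a smooth genus-$g$ curve with $n\geq 1$ points deleted, so $R^2\pi_*\mathbb{Q}=0$; the class $\psi_{n+1}$ therefore cannot come from an $H^0(R^2\pi_*\mathbb{Q})$ term as your phrasing suggests, and extracting the one new class from $H^1(R^1\pi_*\mathbb{Q})$ requires controlling a nontrivial extension in the Leray filtration (or passing through the compactified universal curve). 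Arbarello and Cornalba in fact avoid this step: they invoke Harer's computation of $H^2$ of mapping class groups of punctured surfaces directly, which gives $H^2(\M{g}{n};\mathbb{Q})$ for all $n$ at once rather than by induction. A further small precision is that Harer's theorem computes cohomology of the mapping class group; identifying this with $\operatorname{Pic}_\mathbb{Q}(\M{g}{n})$ uses that the moduli stack is smooth, so rational Picard agrees with rational $H^2$. With those caveats the proposal is sound and reproduces the standard route to the cited result.
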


The standard way to compute the canonical divisor on $\Mbar{g}{n}$ is to first compute it on the stack using Grothendieck-Riemann-Roch, and then study the ramification of the stack-to-coarse map to obtain the canonical class on the coarse moduli space.
For $g \geq 1$ and $g+n \geq 4$ the result is
\begin{equation}\label{eq:formula canonnical class}
    K_{\overline{\mathcal{M}}_{g,n}} = 13\lambda + \psi_1 + ... + \psi_n - 2 \delta - \delta_{1,\emptyset},
\end{equation}
as proven in \cite[Section 2]{HM} for $n=0$, and in \cite[Chapter 13, Theorem 7.16]{ACG} for the general case.\\

\subsection{Local description of $\Mbar{g}{n}$.}\label{subsec:local description Mgn}
Let $[C,p_1,...,p_n] \in \overline{\mathcal{M}}_{g,n}$ be a point in the coarse moduli space. The automorphism group $\operatorname{Aut}(C,p_1,...,p_n)$ acts on the space of first order infinitesimal deformations
\[
V=\coho{0}{C,\Omega_C \otimes \omega_C(p_1+...+p_n)}^*.
\]
There exists an analytic neighborhood of $[C,p_1,...,p_n]$ in the moduli space that is isomorphic to a neighborhood of the origin in the quotient
\[
\coho{0}{C,\Omega_C \otimes \omega_C(p_1+...+p_n)}^*/\operatorname{Aut}(C,p_1,...,p_n).
\]
We now recall some facts related to this quotient, that we will need later.
Let $\tilde{C}_1,...,\tilde{C}_\nu$ be the irreducible components of $C$ and let $\pi_\alpha : C_\alpha \lra \tilde{C}_\alpha$ be the normalization. Let $\Omega_\alpha$ be the cotangent sheaf on $C_\alpha$.
Pulling back differentials gives a morphism of sheaves on $C$,
\begin{equation}\label{eq:pull back of differentials}
    \Omega_C \otimes \omega_C\left(p_1+...+p_n\right) \lra \bigoplus_{\alpha=1}^\nu \pi_{\alpha*}\Omega_\alpha^{\otimes2}\left(\sum_Q Q\right),
\end{equation}
where $Q$ ranges over all special points of $C_\alpha$, i.e. points of $C_\alpha$ that are mapped to a node or marked point.
The morphism \eqref{eq:pull back of differentials} is an isomorphism away from the nodes of $C$.
If $P$ is a node of $C$ locally given by the equation $xy=0$, then $\Omega_C$ is generated around $P$ by the differentials $dx,dy$ subject to the relation $xdy+ydx = 0$, and $\omega_C\left(p_1+...+p_n\right)$ is locally generated by the section $\eta$ which is $\frac{dx}{x}$ on the branch $y=0$ and $-\frac{dy}{y}$ on the branch $x=0$.
From this we see that \eqref{eq:pull back of differentials} is surjective at $P$, with kernel generated by the torsion differential $\tau_P=ydx\otimes\eta = -xdy\otimes\eta$. Therefore we get a short exact sequence of sheaves
\begin{equation}\label{eq:SES of sheaves}
    0 \lra \bigoplus_{P \in C \text{ is a node}}\cc_P \lra \Omega_C \otimes \omega_C\left(p_1+...+p_n\right) \lra \bigoplus_{\alpha=1}^\nu \pi_{\alpha*}\Omega_\alpha^{\otimes 2}\left(\sum_Q Q\right) \lra 0,
\end{equation}
where the map $\cc_P \lra \Omega_C \otimes \omega_C\left(p_1+...+p_n\right)$ sends $a \in \cc$ to $a \tau_P$.
The element $\tau_P$ is called a smoothing parameter for $P$.
Viewing vectors in $\coho{0}{C,\Omega_C\otimes\omega_C(p_1,...,p_n)}$ as linear functionals on $V$, the subspace of $V$ where $\tau_P$ is zero parametrizes deformations of $(C,p_1,...,p_n)$ in which $P$ remains a node.

\subsection{The Reid-Shepherd-Barron-Tai criterion}
Let $G$ be a finite group that acts linearly on a complex vectorspace $V$ of dimension $d$. Let $g \in G$. Since $g$ has finite order, its action on $V$ can be diagonalized and the eigenvalues are roots of unity. Suppose $g$ has eigenvalues $e^{2\pi i r_1},...,e^{2\pi i r_d}$ where $0 \leq r_j < 1$.
The \textit{age} of $g$ on $V$ is defined as
\[
\operatorname{age}_V(g) = \operatorname{age}(g) = \sum_{j=1}^d r_j.
\]
An element $g \in G$ is called \textit{junior} if $0 < \operatorname{age}(g) < 1$ and it is called a \textit{quasi-reflection} if exactly one of the eigenvalues of $g$ is not $1$. The classical version of the Reid-Shepherd-Barron-Tai criterion states:

\begin{proposition}[{\cite[Theorem 3.3]{Ta}, \cite[Theorem 3.1]{Re} }]\label{Reid-Tai OG}
    Suppose that $G$ has no quasi-reflections.
    Then $V/G$ has canonical singularities if and only if $G$ has no junior elements.
\end{proposition}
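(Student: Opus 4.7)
The plan is to prove both directions of the equivalence via the toric description of cyclic quotient singularities, after first reducing from a general $G$ to its cyclic subgroups.

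First I would reduce to the cyclic case by showing that $V/G$ is canonical if and only if $V/\langle g\rangle$ is canonical for every $g\in G$. The forward direction is local: around a point of $V/G$ whose preimage in $V$ has stabilizer exactly $\langle g\rangle \subseteq G$, an analytic neighborhood is isomorphic to a neighborhood of the origin in a cyclic quotient by $\langle g\rangle$, so non-canonicity for the cyclic quotient forces non-canonicity for the full quotient. The reverse direction follows because an $m$-canonical form on the smooth locus of $V/G$ is in particular $\langle g\rangle$-invariant for each $g$, and any regularity statement on a resolution of $V/G$ can be pulled back to test regularity on a resolution of $V/\langle g\rangle$.

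Next, for $G = \langle g\rangle$ cyclic of order $n$, I would diagonalize the action so that $g = \operatorname{diag}(\zeta^{a_1},\ldots,\zeta^{a_d})$ with $\zeta = e^{2\pi i/n}$ and $0\leq a_i<n$. Then $V/G$ is the affine toric variety defined by the positive orthant $\sigma = \sum_i \rr_{\geq 0}e_i$ inside the overlattice $N = \zz^d + \zz\cdot\frac{1}{n}(a_1,\ldots,a_d)$. The no-quasi-reflection hypothesis on $G$ translates exactly to the statement that each $e_i$ remains primitive in $N$: a quasi-reflection $g^k$ fixing the $i$-th coordinate hyperplane would produce a lattice vector supported only on the $i$-th coordinate, sub-primitive with respect to $e_i$. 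Therefore any toric resolution of $V/G$ is obtained by subdividing $\sigma$ along primitive rays generated by vectors $v = \frac{1}{n}(b_1,\ldots,b_d)\in N$ with $0\leq b_i<n$, and the standard toric discrepancy formula yields
\[
a(E_v) = \sum_{i=1}^d \frac{b_i}{n} - 1.
\]
The lattice vector $v$ corresponds to the element $g^k\in G$ with $ka_i \equiv b_i\pmod n$, and the sum on the right is precisely $\operatorname{age}(g^k)$.

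Combining these steps, $V/G$ has canonical singularities if and only if $a(E_v)\geq 0$ for every such $v$, if and only if $\operatorname{age}(g^k)\geq 1$ for every nontrivial $g^k \in G$, which is exactly the condition that $G$ has no junior elements. I expect the main obstacle to be the reduction to cyclic subgroups: specifically, making the slice argument around stabilizer points rigorous and checking that no discrepancy contributions arise from the intermediate cover $V/\langle g\rangle \to V/G$ under the no-quasi-reflections hypothesis. Once that is handled, the toric discrepancy computation in terms of ages closes out both implications.
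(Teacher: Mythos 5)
The paper does not prove this proposition; it is quoted directly from Tai and Reid, and the cyclic--reduction lemma the paper states immediately afterward (Tai's Proposition 3.1) is likewise quoted, not proved. So you are reconstructing a proof that the paper treats as a black box. Your toric computation for the cyclic case is essentially sound: lattice points $v\in N\cap\sigma$ with $\sum_i v_i<1$ automatically have $0\le v_i<1$ and so are exactly the $\tfrac{1}{n}(b_1,\ldots,b_d)$ coming from nontrivial powers $g^k$; star-subdividing $\sigma$ at such a $v$ (or at the primitive rescaling $v/\ell$, which only decreases the sum) produces an exceptional divisor with discrepancy $\sum_i v_i-1<0$, and conversely if every such $v$ has $\sum_i v_i\ge1$ then every exceptional ray in any toric resolution of $\sigma$ has nonnegative discrepancy.

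The genuine gap is in the reduction to cyclic subgroups. Your ``forward'' direction postulates a point of $V$ whose $G$-stabilizer is exactly $\langle g\rangle$; this need not exist. For instance, take $G=\langle\operatorname{diag}(\zeta_{10},\zeta_{10})\rangle$ acting on $\cc^2$ with no quasi-reflections and $g=\operatorname{diag}(\zeta_5,\zeta_5)$, a junior element: here $V^g=\{0\}$, and the only $g$-fixed point is the origin, whose stabilizer is all of $G$. The correct replacement is to observe that under the no-quasi-reflections hypothesis both $V\to V/\langle g\rangle$ and $V\to V/G$ are \'etale in codimension one, hence by multiplicativity of ramification indices so is the intermediate finite map $V/\langle g\rangle\to V/G$; discrepancies then transform by $a(E')+1=r\bigl(a(E)+1\bigr)$, and since finite quotient singularities are klt (so $a(E')>-1$), a negative discrepancy over $V/\langle g\rangle$ forces one over $V/G$. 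Your ``reverse'' direction, as written, only records the trivial implication (a form regular on a resolution of $V/G$ pulls back to a form regular on a resolution of $V/\langle g\rangle$). What you actually need is the converse -- that a pluricanonical form on $V/G$ lifts to a resolution as soon as its pullback to each $V/\langle g\rangle$ does -- which is exactly Tai's Proposition 3.1 and requires a real argument about the cyclic inertia of each exceptional prime divisor on a resolution of $V/G$. The paper states it as a separate lemma precisely because it does not come for free; you should cite it explicitly rather than fold it into a sentence.
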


A limitation of the Reid-Shepherd-Barron-Tai criterion is that the criterion gives no information about \textit{how bad} the non-canonical singularities are.
Even when $V/G$ has non-canonical singularities, some pluri-canonical forms will lift to a resolution of singularities.
In what follows, we describe a condition, due to \cite{CCM}, such that pluri-canonical forms on $V/G$ that satisfy the condition lift to a resolution of singularities.
The first step is to reduce to the case where $G$ is cyclic.

\begin{lemma}[{\cite[Proposition 3.1]{Ta}}]
    Let $\eta$ be a pluri-canonical form defined on a neighborhood of the origin in $V/G$. Then $\eta$ lifts to a resolution of singularities if and only if the pull-back of $\eta$ by $V/\langle g \rangle \lra V/G$ lifts to a resolution of singularities for all $g \in G$.
\end{lemma}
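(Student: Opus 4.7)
The plan is to prove the equivalence in both directions, with the forward implication being essentially formal and the reverse implication being the substantive content of Tai's argument.

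For the forward direction, suppose $\eta$ lifts to a resolution $\pi : \widetilde{V/G} \lra V/G$, so that $\pi^*\eta$ is a regular pluri-canonical form. Fix $g \in G$ and write $f_g : V/\langle g\rangle \lra V/G$ for the natural finite quotient map. I would choose any resolution $\pi_g : \widetilde{V/\langle g\rangle} \lra V/\langle g\rangle$, pass to a smooth model $Y$ that dominates both $\widetilde{V/G}$ and $\widetilde{V/\langle g\rangle}$ (for instance by resolving the normalized fiber product), and exploit the fact that the pullback of a regular pluri-canonical form by a generically finite morphism between smooth varieties remains regular. Since the lifted form on $Y$ coming from $\widetilde{V/G}$ agrees generically with the pullback $f_g^*\eta$ viewed on $Y$, and both sides are $\langle g\rangle$-invariant, the form $f_g^*\eta$ extends regularly to $\widetilde{V/\langle g\rangle}$ (possibly after further blowing up).

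For the reverse direction I would argue locally and reduce the lifting question to checking pole orders along exceptional divisors. Take a $G$-equivariant resolution $\widetilde V \lra V$ that is compatible with the isotropy stratification (for instance a toroidal resolution adapted to the action), and let $\pi : \widetilde{V/G} \lra V/G$ be the induced resolution of the quotient. Regularity of $\pi^*\eta$ has to be verified along each prime exceptional divisor $E \subset \widetilde{V/G}$. The generic point of such an $E$ corresponds, upstairs in a $G$-equivariant model, to a smooth point whose stabilizer in $G$ is cyclic; denote by $\langle g\rangle$ this stabilizer. The pole order of $\pi^*\eta$ along $E$ is determined entirely by the action of $g$ on the normal data at that generic point, and is therefore the same whether we compute it on $\widetilde{V/G}$ or on a resolution of $V/\langle g\rangle$ that dominates the analogous exceptional divisor. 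Thus if $f_g^*\eta$ lifts to a resolution of $V/\langle g\rangle$ for every $g \in G$, the pole order of $\pi^*\eta$ along every exceptional divisor of $\widetilde{V/G}$ is non-positive, and $\pi^*\eta$ is regular.

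The main obstacle is making precise the claim that the pole order along a given exceptional divisor is controlled by a single cyclic subgroup. This requires choosing resolutions carefully enough that the inertia group along each exceptional divisor is cyclic and that the equivariant resolution downstairs matches a resolution of $V/\langle g\rangle$ in a neighborhood of the corresponding exceptional locus. Once this compatibility is set up, the regularity check reduces to an age computation identical to the one performed on $V/\langle g\rangle$, and the conclusion follows from the hypothesis applied to each $g \in G$.
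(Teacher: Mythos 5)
Note first that the paper does not prove this lemma at all --- it simply cites Tai's Proposition 3.1 --- so there is no in-paper argument to compare your proposal against, and I can only assess the proposal on its own merits. Your forward direction is correct, and can even be shortened: once you know $f_g^*\eta$ lifts to the common smooth model $Y$, that $Y$ already serves as a resolution of $V/\langle g\rangle$ to which $f_g^*\eta$ lifts, so there is no need to descend back to $\widetilde{V/\langle g\rangle}$.

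The reverse direction has the right skeleton, but two of the scaffolding claims are incorrect and the central step remains a genuine gap. First, a $G$-equivariant modification $\widetilde V \to V$ of the already-smooth $V$ does not in general have smooth quotient $\widetilde V/G$, so calling $\widetilde V/G$ an ``induced resolution'' is unjustified; producing a $G$-equivariant model whose quotient is smooth is itself a nontrivial construction. Second, the \emph{stabilizer} of the generic point of an exceptional divisor in $\widetilde V$ need not be cyclic; what is automatically cyclic (in characteristic $0$, independently of how the resolution is chosen) is the \emph{inertia group}, the kernel of the stabilizer's action on the function field of the divisor, and that is the correct cyclic subgroup $\langle g\rangle$ to which one should reduce. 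Finally, the assertion that the pole order of $\pi^*\eta$ along an exceptional divisor of a resolution of $V/G$ equals the pole order of the pullback of $f_g^*\eta$ on a resolution of $V/\langle g\rangle$ is precisely the content of the lemma and is not just bookkeeping --- you flag it yourself as the open obstacle. The fact that closes it is that the divisorial valuation on $V/\langle g\rangle$ lying between $V$ and $V/G$ is unramified over $V/G$: since the inertia of the chosen valuation $\nu$ on $V$ is the same whether computed inside $\langle g\rangle$ or inside $G$, the two ramification indices over the intermediate quotients coincide and the ramification index of $V/\langle g\rangle \to V/G$ at $\nu$ is $1$. With this observation the entire reverse direction is most cleanly phrased via the valuative criterion for lifting (every divisorial valuation on $V/G$ extends to one on $V$ with cyclic inertia $\langle g\rangle$; its discrepancy with respect to $\eta$ equals its discrepancy on $V/\langle g\rangle$ with respect to $f_g^*\eta$, which is nonnegative by hypothesis), and no distinguished $G$-equivariant resolution of $V$ is needed at all.
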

In particular, if $V/\langle g \rangle$ has canonical singularities for all $g \in G$, then $V/G$ has canonical singularities.

\begin{proposition}[{\cite[Corollary 5.5]{CCM}}]\label{Reid-Tai with vanishing}
    Let $g = \operatorname{diag}(e^{2\pi ir_1},...,e^{2\pi ir_d})$ be an element of finite order $k$ acting on $\cc^d$ and suppose that $\langle g \rangle$ has no quasi-reflections.
    Let $D_j \subseteq \cc^d$ be the locus where the $j$-th coordinate is zero.
    Let $\eta$ be an $m$-canonical form defined in a neighborhood of the origin of $\cc^d/\langle g \rangle$ and for $j=1,...,d$ let $b_j$ be the order of vanishing of $\eta$ along $D_j$.
    Then $\eta$ extends to a resolution of singularities if
    \begin{equation}\label{eq:inequality for lifting forms}
        \sum_{j = 1}^d (b_j+m)\set{lr_j} \geq m \text{ for } l = 1,...,k-1,
    \end{equation}
    where $\{x\}$ denotes the fractional part of a real number $x$.
\end{proposition}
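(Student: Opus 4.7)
The plan is to resolve the cyclic quotient $\cc^d/\langle g \rangle$ by toric methods and read off both the discrepancies and the pullback vanishing orders of $\eta$ from the lattice combinatorics. Setting $N := \zz^d + \zz \cdot (r_1,\ldots,r_d) \subset \qq^d$ and $\sigma := \qq^d_{\geq 0}$, the cyclic quotient is the affine toric variety $U_\sigma$ with lattice $N$. The no-quasi-reflection assumption enters here: the generic point of every coordinate hyperplane $\tilde D_j = \{x_j=0\}\subset\cc^d$ has trivial stabilizer, so the quotient map is unramified in codimension one along $\tilde D_j$, and the vanishing order of the pullback of $\eta$ to $\cc^d$ along $\tilde D_j$ is exactly $b_j$.

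Next I would choose any smooth simplicial refinement $\Sigma$ of $\sigma$ that leaves the original rays $e_1,\ldots,e_d$ intact, obtaining a toric resolution $\pi\colon Y \to U_\sigma$ whose exceptional prime divisors $E_v$ are indexed by the primitive lattice points $v \in N \cap \mathrm{int}(\sigma)$ added as new rays. Writing $\pi^*\eta = f\,(dx_1\wedge\cdots\wedge dx_d)^m$ with $f = x_1^{b_1}\cdots x_d^{b_d}\,u$ (where $u$ is not identically zero on any $\tilde D_j$), the standard toric pullback formulae give $\mathrm{ord}_{E_v}(\pi^*x_j) = v_j$ and $\mathrm{ord}_{E_v}(dx_1\wedge\cdots\wedge dx_d) = \sum_j v_j - 1$; the latter follows from comparing with the torus-invariant logarithmic form $\bigwedge_j dx_j/x_j$, whose divisor is minus the sum of all toric boundary divisors. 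Combining these,
\[
\mathrm{ord}_{E_v}(\pi^*\eta) \;\geq\; \sum_{j=1}^d (b_j+m)\,v_j \;-\; m,
\]
so $\eta$ extends to $Y$ provided the right-hand side is nonnegative for every primitive interior $v$.

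The final step matches these lattice points with the parameters $l = 1,\ldots,k-1$ appearing in \eqref{eq:inequality for lifting forms}. Every $v \in N$ decomposes uniquely as $v = v^{(l)} + w$ with $l \in \{0,1,\ldots,k-1\}$, $w \in \zz^d$, and $v^{(l)} := (\{lr_1\},\ldots,\{lr_d\})$ the canonical representative of the $l$-th coset of $\zz^d$ in $N$ (the hypothesis that $g$ has order exactly $k$ gives $N/\zz^d \cong \zz/k$). Requiring $v \in \mathrm{int}(\sigma)$ forces $l \neq 0$ together with $w_j \geq 0$ (and $w_j \geq 1$ whenever $\{lr_j\}=0$). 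Since $b_j+m \geq 0$, every such $v$ satisfies $\sum_j (b_j+m) v_j \geq \sum_j (b_j+m)\{lr_j\}$, so the hypothesis \eqref{eq:inequality for lifting forms} is enough. Extension across an arbitrary resolution then follows from the fact that any two resolutions agree in codimension one outside their exceptional loci.

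I expect the main obstacle to be the toric pullback calculation itself, in particular keeping the $\sum_j v_j - 1$ term in $\mathrm{ord}_{E_v}(dx_1\wedge\cdots\wedge dx_d)$ correct, together with the careful use of the no-quasi-reflection hypothesis: it is precisely this that allows the prescribed vanishing orders downstairs to equal those upstairs on $\cc^d$ without inserting ramification corrections into \eqref{eq:inequality for lifting forms}.
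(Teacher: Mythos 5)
The paper does not actually prove this proposition: it is cited directly from \cite[Corollary 5.5]{CCM}, so there is no internal argument to compare against. Evaluated on its own merits, your toric approach is the natural one and is essentially correct: identifying $\cc^d/\langle g\rangle$ with the affine toric variety for $\sigma = \rr^d_{\geq 0}$ and the lattice $N = \zz^d + \zz(r_1,\ldots,r_d)$, using the no-quasi-reflection hypothesis to see that the rays $e_1,\ldots,e_d$ remain primitive in $N$ (so the quotient map is unramified in codimension one and the $b_j$ are computed correctly), and bounding $\operatorname{ord}_{E_v}(\pi^*\eta)$ via the torus-invariant log form. Writing $\eta = \bar f\cdot\bigl(\bigwedge_j dx_j/x_j\bigr)^{\otimes m}$ with $\bar f$ an invariant regular function satisfying $\operatorname{ord}_{D_j}\bar f \geq b_j+m$, one expands $\bar f$ in invariant monomials $x^a$, $a\in M'\cap\zz^d_{\geq 0}$, and obtains $\operatorname{ord}_{E_v}(\pi^*\eta) \geq \sum_j(b_j+m)v_j - m$ exactly as you do. (I would avoid writing $\operatorname{ord}_{E_v}(\pi^*x_j)=v_j$, since $x_j$ is not a function on $X$ or $Y$ and $v_j\in\qq$ need not be an integer; the clean statement is the one for invariant monomials.)

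There is, however, one genuine gap in the final lattice-counting step. You assert that requiring $v\in\operatorname{int}(\sigma)$ forces $l\neq 0$ in the decomposition $v = v^{(l)}+w$. This is false: $v=(1,1,\ldots,1)$ is a primitive interior lattice point with $l=0$, and such rays do appear in resolutions (e.g.\ in barycentric subdivisions). So the hypothesis \eqref{eq:inequality for lifting forms}, which only covers $l=1,\ldots,k-1$, does not directly handle these $v$. The fix is easy and should be stated: if $l=0$ then $v\in\zz^d_{\geq 0}$ is primitive, and since $v\notin\{e_1,\ldots,e_d\}$ (those are not exceptional) it must have at least two nonzero coordinates, whence $\sum_j(b_j+m)v_j \geq 2m \geq m$. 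A second, harmless imprecision: new rays of the refinement can lie on proper faces of $\sigma$, not only in $\operatorname{int}(\sigma)$; but the same estimate works there because $v_j \geq 0$ still forces $w_j \geq 0$ (as $\{lr_j\}<1$), so $v_j \geq \{lr_j\}$. With the $l=0$ case added, the argument is complete.
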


Proposition \ref{Reid-Tai OG} and Proposition \ref{Reid-Tai with vanishing} assume that there are no quasi-reflections in $G$ and $\langle g \rangle$ respectively.
We now describe how to deal with quasi-reflection in the case where $G=\langle g \rangle$ is cyclic. Suppose $g = \operatorname{diag}(e^{2\pi ir_1},...,e^{2\pi ir_d})$ acts on $\cc^d$ with order $k$. Let $x_1,...,x_d$ be the standard coordinates on $\cc^d$. For $j \in \set{1,...,d}$ fixed, let $m_j$ be the minimal positive integer such that $g^{m_j}$ fixes $x_i$ for all $i \neq j$.
Then $H = \langle g^{m_1},...,g^{m_d} \rangle$ is the subgroup of $\langle g \rangle$ generated by all quasi-reflections. The quotient $\cc^d/H$ is isomorphic to affine space with coordinates $y_j = x_j^{\lambda_j}$ for $j=1,...,d$, where $\lambda_j = \frac{k}{m_j}$,
and $G/H$ acts linearly on $\cc^d/H$ without quasi-reflections.
More precisely, the generator $gH$ of $G/H$ acts on $\cc^d/H$ with eigenvalues $e^{2\pi i\lambda_1r_1},...,e^{2\pi i\lambda_dr_d}$.
We can then apply Proposition \ref{Reid-Tai OG} and Proposition \ref{Reid-Tai with vanishing} to study the lifting of pluri-canonical forms on
\[
\cc^d/G \simeq \left(\cc^d/H\right)/\left(G/H\right).
\]
The following examples illustrate the theory above. These examples will appear in the analysis of the singularities of $\Mbar{3}{n}$, when considering automorphism of curves with an elliptic tails with $j$-invariant $1728$ and $0$ respectively.
\begin{example}\label{ex:j=1728}
    Let $G$ be cyclic of order $4$, generated by $g = \operatorname{diag}(\pm i, - 1, \pm 1,...,\pm1)$. We will show that $\cc^d/G$ has canonical singularities. Note that $g^2 = \operatorname{diag}(-1,1,...,1)$ is a quasi-reflection. The element $\bar{g}$ acts on $\cc^d/\langle g^2 \rangle$ with eigenvalues $-1,-1,\pm1,...,\pm1$. Therefore the action of $G/\langle g^2 \rangle = \set{\bar{1},\bar{g}}$ on $\cc^d/\langle g^2 \rangle$ has no junior elements.
    By Proposition \ref{Reid-Tai OG}, $\cc^d/G$ has canonical singularities.
\end{example}

\begin{example}\label{ex:j=0}
    Let $G$ be cyclic of order $6$, generated by $g = \operatorname{diag}(\pm \zeta_6, \zeta_6^2, \pm 1,...,\pm1)$, where $\zeta_6$ is a primitive sixth root of unity.
    We will show that any $m$-canonical form $\eta$ on $\cc^d/G$ with order of vanishing at least $m$ along the divisor $D_1 = \set{x_1 = 0}$ extends to a resolution of singularities.
    We consider three cases.\\
    \textbf{Case 1:} $g = \operatorname{diag}(\zeta_6, \zeta_6^2, 1,...,1)$. Then $g^3 = \operatorname{diag}(-1,1,...,1)$ is a quasi-reflection. The element $\bar{g}$ acts on $\cc^d/\langle g^3 \rangle$ with eigenvalues $\zeta_6^2,\zeta_6^2,1,...,1$. Then $G/\langle g^3 \rangle$ has junior elements. However, condition \eqref{eq:inequality for lifting forms} is satisfied with $b_1 \geq m$. Hence $\eta$ lifts to a resolution of singularities.\\
    \textbf{Case 2:} $g = \operatorname{diag}(-\zeta_6, \zeta_6^2, -1,...,1)$. Then $g^3 = \operatorname{diag}(1,1,-1,...,1)$ is a quasi-reflection. The element $\bar{g}$ acts on $\cc^d/\langle g^3 \rangle$ with eigenvalues $-\zeta_6,\zeta_6^2,1,...,1$. Then $G/\langle g^3\rangle$ has no junior elements, and hence $\cc^d/\langle g^3 \rangle$ has canonical singularities.\\
    \textbf{Case 3:} In the remaining cases, one checks that $G$ has no junior elements, and therefore $\cc^d/G$ has canonical singularities.
\end{example}

The variants of the Reid-Shepherd-Barron-Tai criterion above, will allow us to show that certain pluri-canonical forms on $\Mbar{3}{n}$ lift to a resolution of singularities on the complement of a high codimension locus (for details, see the proof of Proposition \ref{prop:lifting forms local version}). 
The following result will allow us to deal with the remaining points.
\begin{proposition}[{{\cite[Appendix 1 to Section 1]{HM}}}]\label{prop:lifting forms HM version}
    Let $\eta$ be a pluri-canonical form defined in a neighborhood of the origin in $V/G$.
    Suppose that for all junior elements $g \in G$ there exists a vector $v \in V$ fixed by $g$ such that $\eta$ extends to a resolution of singularities of a neighborhood of $[v] \in V/G$. Then $\eta$ lifts to a resolution of singularities of a neighborhood of the origin of $V/G$.
\end{proposition}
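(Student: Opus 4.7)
My plan is to reduce to cyclic subgroups via Tai's lemma and then exploit the fact that, along each exceptional divisor of a suitable resolution, the condition for regularity of the lift of a pluri-canonical form is determined by data that is constant along the divisor.

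First, I would invoke Tai's lemma (the lemma preceding Proposition \ref{Reid-Tai with vanishing}): it reduces the extension of $\eta$ over a resolution of $V/G$ at the origin to the extension of the pullback of $\eta$ over a resolution of $V/\langle h\rangle$ at the origin, for every $h \in G$. If $\langle h\rangle$ has no junior element then, after quotienting by its quasi-reflections as in the construction preceding Proposition \ref{prop:lifting forms HM version}, Proposition \ref{Reid-Tai OG} gives canonical singularities and the extension is automatic. So I would fix some junior $g = h^{j}$ in $\langle h\rangle$; the hypothesis provides $v \in V^{g}$ at which $\eta$ extends to a resolution of $V/G$, and applying Tai's lemma again shows that the pullback of $\eta$ extends to a resolution of $V/\langle h\rangle$ at the image of $v$. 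The task is then to propagate this extension from a point of the image of $V^{g}$ to the origin of $V/\langle h\rangle$.

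For this propagation I would diagonalize $g = \operatorname{diag}(e^{2\pi i r_1},\ldots,e^{2\pi i r_d})$, so that $V^{g}$ is the coordinate subspace indexed by $\{j : r_j = 0\}$, and choose a toric resolution of $V/\langle h\rangle$ in which each junior element contributes a single smooth exceptional divisor $E_{g}$ dominating the image of $V^{g}$. The same local computation in toric charts that underlies Proposition \ref{Reid-Tai with vanishing} expresses the order of vanishing of the lift of $\eta$ along $E_{g}$ as a fixed combination of the fractional parts $\{\ell r_j\}$ and of the orders of vanishing $b_j$ of $\eta$ along the coordinate hyperplanes $\{x_j = 0\}$ with $r_j \neq 0$. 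Both sets of data are translation-invariant along $V^{g}$, so this order of vanishing is constant along $E_{g}$. Hence regularity of the lift at a single point of $E_{g}$ forces regularity along the whole of $E_{g}$, in particular over the origin of $V/\langle h\rangle$. Patching these statements across all $h \in G$ via Tai's lemma yields the extension of $\eta$ at the origin of $V/G$.

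The main obstacle will be the construction and bookkeeping of the toric resolution adapted to the stratification of $V/\langle h\rangle$ by fixed loci of powers of $h$: one must arrange that each junior element yields a single smooth exceptional divisor, identify the combinatorial formula for the order of vanishing of the lift of $\eta$ along that divisor, and verify that this formula really depends only on data that is constant along $V^{g}$. This is essentially the content of Harris--Mumford's original appendix argument in \cite{HM}, and will require some care when several junior elements, quasi-reflections, or larger isotropy stack up at the origin.
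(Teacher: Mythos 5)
The paper does not supply a proof of this proposition: it is stated with a citation to \cite[Appendix 1 to Section 1]{HM} and no in-text argument, so there is no internal proof to compare yours against. Your sketch is a faithful reconstruction of the shape of the Harris--Mumford argument. The chain (i) Tai's lemma reduces to cyclic $\langle h\rangle$; (ii) pass to a toric resolution; (iii) the order of vanishing of the lift of $\eta$ along the exceptional divisor $E_g$ attached to a junior power $g=h^{\ell}$ is the valuation-theoretic quantity $\sum_j(b_j+m)\{\ell r_j\}-m$, which depends only on the eigenvalues of $g$ and on the divisorial orders $b_j$, hence is the same whether read over the origin or over a general point of the image of $V^g$; (iv) test it at the point $v$ furnished by the hypothesis --- is exactly the argument that underlies both \cite{HM} and Proposition \ref{Reid-Tai with vanishing}.

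Two steps want tightening before this closes. First, the inference ``regularity of the lift at a single point of $E_g$ forces regularity along the whole of $E_g$, in particular over the origin of $V/\langle h\rangle$'' is too fast: regularity over the origin requires nonnegative order along \emph{every} exceptional divisor above the origin, not only the one $E_g$. You must run the valuation bound for \emph{each} junior power $h^{\ell}$ (the hypothesis provides a separate $v$ for each), and separately observe that for the non-junior powers the inequality $\sum_j(b_j+m)\{\ell r_j\}\ge m$ holds automatically, since then $\sum_j\{\ell r_j\}\ge1$ and each $b_j\ge0$; only together do these give regularity over the origin. Second, the second invocation of Tai's lemma at $[v]$ hides stabilizer bookkeeping: near $[v]$ the local model of $V/\langle h\rangle$ is the quotient by $\langle h\rangle_v=\langle h\rangle\cap G_v$, and one must check $g\in\langle h\rangle_v$ (true, as $v\in V^g$) and $\langle h\rangle_v$ cyclic (true, as a subgroup of the cyclic $\langle h\rangle$) before Tai's lemma applies. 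Finally, the quasi-reflection issue that you mention in passing is not merely bookkeeping in the paper's application: on $\Mbar{3}{n}$ hyperelliptic involutions produce quasi-reflections, the toric data and the formula $\sum_j(b_j+m)\{\ell r_j\}$ must be read on $V/H$ rather than $V$, and this shifts the eigenvalues and can change which powers are junior, so it must be reconciled with the hypothesis ``for all junior $g\in G$.''
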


\section{Singularities on $\Mbar{3}{n}$}\label{sec:lifting forms}

The goal of this section is to prove Proposition \ref{prop:condition to lift forms}.
The first step in the singularity analysis is to study the age of automorphisms of pointed curves acting on the space of first order infinitesimal deformations, in order to apply the Reid-Shepherd-Barron-Tai criterion. 
Following \cite[Section 1]{HM} we first study the age of automorphisms of smooth curves.
In the next lemma, we allow the automorphism to swap points in pairs. These pairs will correspond to pairs of inverse images of a node of a stable curve.

\begin{lemma}\label{lemma:age on smooth pointed curve}
    Let $[C,p_1,...,p_r,q_1,...,q_{2s}] \in \mathcal{M}_{g,r+2s}$ be a smooth $r+2s$-pointed curve of genus $g$ and suppose that $r+2s > 0$.
    Let $\varphi$ be a non-trivial automorphism of $C$ that fixes the points $p_i$ for $i=1,...,r$ and swaps the points $q_{2j-1},q_{2j}$ for $j=1,...,s$. Consider the age on the vectorspace $V = \coho{0}{C,\omega_C^{\otimes 2}\left(\sum_{i=1}^r p_i + \sum_{j=1}^{2s} q_j \right)}$. Then either $\operatorname{age}(\varphi) \geq 1$, or $s \geq 1$ and $\varphi$ has order $2$, or $r=0$ and $g+s \leq 2$, or $(C,p_1,...,p_r,q_1,...,q_{2s},\varphi)$ is as described in one of the rows of Table \ref{table: junior autos of smooth curves}.
\end{lemma}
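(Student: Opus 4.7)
The plan is to compute $\operatorname{age}(\varphi)$ directly by diagonalizing the action of $\varphi^{*}$ on $V$. Let $k$ be the order of $\varphi$, let $\zeta = e^{2\pi i/k}$, and let $V = \bigoplus_{a=0}^{k-1} V_{a}$ be the decomposition into $\zeta^{a}$-eigenspaces, so that $\operatorname{age}(\varphi) = \sum_{a=1}^{k-1}(a/k)\dim V_{a}$. I would compute the multiplicities $\dim V_{a}$ using the Chevalley--Weil / Eichler trace formula for equivariant sections of line bundles on a curve, applied to $L := \omega_{C}^{\otimes 2}(\sum_{i} p_{i} + \sum_{j} q_{j})$. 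The inputs are the genus $g'$ of the quotient $C' := C/\langle \varphi \rangle$, the local rotation numbers $\zeta^{t_{P}}$ of $\varphi$ on $T_{P}C$ at each fixed point $P$, and the combinatorics of how the $p_{i}$ and $q_{j}$ distribute over the fixed locus and the swap orbits of $\varphi$ on $C$.

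With those multiplicities in hand, the next step is to bound $\operatorname{age}(\varphi)$ from below by purely local contributions. Each fixed marked point $p_{i}$ yields a residue eigenvector in $V$ on which $\varphi$ acts with primitive character $\zeta^{c_{i}}$, $c_{i} \neq 0$, contributing $c_{i}/k \geq 1/k$ to the age; each swapped pair $\{q_{2j-1},q_{2j}\}$ contributes $1/2$ per pair once $k$ is even, coming from the single $-1$-eigenvector on the two-dimensional residue space exchanged by $\varphi$; and the $\varphi$-invariant part of $V$ accounts for sections that descend to $C'$, contributing a nonnegative quantity that becomes positive as soon as $g'$ or the ramification is sufficiently large. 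Carefully summing these contributions would yield $\operatorname{age}(\varphi) \geq 1$ outside three structural failure modes: $\varphi$ an involution swapping some pairs (each pair delivering only $1/2$ in the $k=2$ case), $r = 0$ with $g + s \leq 2$ (where the contribution from the global piece is too small), and a handful of exceptional low-order low-genus configurations.

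The final step is to enumerate the finitely many tuples $(k, g, g', \{t_{P}\})$ compatible with Riemann--Hurwitz for $C \to C'$ and with the prescribed action on the $r + 2s > 0$ markings, then check each one against the age bound to isolate the remaining exceptional cases, which will populate the table. The main obstacle is precisely this enumeration: for $k \in \{3, 4, 5, 6\}$ and small $g$, the fractional local characters can combine in many ways to just miss summing to $1$, and every branching type must be verified by hand. This broadly follows the template of \cite[Section 1]{HM} for $\overline{\mathcal{M}}_{g}$, but with substantial extra bookkeeping caused by having both fixed and swapped marked points, and by the simple poles they introduce in $L$.
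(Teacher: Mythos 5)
Your approach and the paper's share the same broad template (compute or bound the eigenvalue multiplicities, then isolate the few configurations where the age can fall below~$1$), but the paper uses a crucial shortcut that your plan omits: it restricts the $\varphi$-action to the subspace $W=\coho{0}{C,\omega_C^{\otimes 2}}\subseteq V$ and invokes the first proposition of \cite[Section~1]{HM}, which already says $\operatorname{age}_W(\varphi)\geq 1$ unless $(C,\varphi)$ is one of five explicit configurations (rational, elliptic, genus~$2$ hyperelliptic or bielliptic, genus~$3$ hyperelliptic). This collapses the ``enumerate all $(k,g,g',\{t_P\})$ compatible with Riemann--Hurwitz'' step, which you correctly flag as the main obstacle, to a case-by-case check with explicit models (elliptic curves as $\cc/\Lambda$, hyperelliptic curves as $y^2=F(x)$) and explicit eigenbases. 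Chevalley--Weil would in principle also produce the multiplicities, but it buys you nothing the residue filtration doesn't, at the cost of more bookkeeping.

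There is, however, a genuine gap in the local lower bound. You assert that each fixed marked point $p_i$ yields a residue eigenvector in $V$ with nontrivial character, hence contributes at least $1/k$ to the age. That is false when $\deg\omega_C^{\otimes 2}$ is small: the filtration $\coho{0}{\omega_C^{\otimes2}}\subset\coho{0}{\omega_C^{\otimes2}(p_1)}\subset\cdots$ need not jump at the first step. For $g=1$ one has $\deg\omega_C^{\otimes 2}=0$, so $h^0(\omega_C^{\otimes2}(p_1))=h^0(\omega_C^{\otimes2})=1$; with $r=1$, $s=0$, $\operatorname{ord}(\varphi)=2$, the space $V$ is spanned by $dz^{\otimes 2}$ alone, on which $\varphi$ acts trivially, so $\operatorname{age}(\varphi)=0$, not $\geq 1/2$. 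This is precisely row~(1) of Table~\ref{table: junior autos of smooth curves}, which your bound would wrongly exclude. The same degeneracy affects $g=0$ (where the first three added points produce no residue jump) and hence contaminates exactly the low-genus cases that populate the table. You would need to replace the per-point bound by the correct statement that residue eigenvectors appear only once the twisted degree exceeds $2g-2$, and account separately for the finite-dimensional ``base'' $\coho{0}{\omega_C^{\otimes2}}$ (or the degenerate pieces) on which $\varphi$ may act trivially.
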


\begin{table}[h]
    \centering
    \begin{tabular}{|c|c|c|c|c|c|c|}
    \hline
         Case number & $g$ & $r$ & $s$ & Description of $C$ & Description of $\varphi$ & Eigenvalues\\ \hline
         (1) & $1$ & $1$ & $0$ & / & $\operatorname{ord}(\varphi)=2$ & $1$ \\ \hline
         (2) & $1$ & $2$ & $0$ & / & $\operatorname{ord}(\varphi)=2$ & $1,-1$ \\ \hline
         (3) & $1$ & $1$ & $0$ & $j(C)=0$ & $\operatorname{ord}(\varphi)=3$ & $\zeta_3^2$ \\ \hline
         (4) & $1$ & $1$ & $0$ & $j(C)=0$ & $\operatorname{ord}(\varphi)=6$ & $\zeta_6^2$ \\ \hline
         (5) & $1$ & $1$ & $0$ & $j(C)=1728$ & $\operatorname{ord}(\varphi)=4$ & $-1$. \\ \hline
         (6) & $1$ & $2$ & $0$ & $j(C)=1728$ & $\operatorname{ord}(\varphi)=4$ & $-1,\zeta_4$. \\ \hline
         (7) & $2$ & $1$ & $0$ & / & hyperelliptic involution & $1,1,1,-1$. \\ \hline
         (8) & $3$ & $0$ & $0$ & $C$ is hyperelliptic & hyperelliptic involution & $1,1,1,1,1,-1$. \\ \hline
    \end{tabular}
    \caption{
    List of some non-trivial junior automorphisms of smooth pointed curves. The automorphism fixes $r$ marked points and swaps $s$ pairs of marked points.}
    \label{table: junior autos of smooth curves}
\end{table}

\begin{proof}
Consider the subspace  $W = \coho{0}{C,\omega_C^{\otimes 2}}$ of $V$. The action of $\varphi$ on $V$ restricts to an action on $W$ and hence $\operatorname{age}_W(\varphi) \leq \operatorname{age}_V(\varphi)$.
By the first proposition in \cite[Section 1]{HM} we have that $\operatorname{age}_W(\varphi) \geq 1$ or $(C,\varphi)$ is as described in one of the following cases:
\begin{itemize}
    \item[(A)] $g = 0$.
    \item[(B)] $g = 1$.
    \item[(C)] $g = 2$ and $\varphi$ is the hyperelliptic involution.
    \item[(D)] $g = 2$, $C$ is a double cover of an elliptic curve and $\varphi$ is the associated involution.
    \item[(E)] $g = 3$, $C$ is hyperelliptic and $\varphi$ is the hyperelliptic involution.
\end{itemize}
It is enough to prove the lemma for these cases only.\\

\textbf{Case (A):} $C$ is rational.\\
The pointed curve in question is stable, thus $r+2s \geq 3$. Since $\varphi^2$ fixes $p_1,...,p_r,q_1,...,q_{2s}$ we conclude that $\varphi$ has order $2$. This implies that $r \leq 2$ and therefore $s \geq1$. We are done in this case.\\

\textbf{Case (B):} $C$ has genus $1$.\\
We will view $C$ as a quotient of the complex plane $\cc$ by a lattice $\Lambda$.
We split in several sub-cases.\\

\textbf{Case (B1):} $\varphi$ is a translation.\\
In this case $\varphi$ has no fixed points. It follows that $r=0$. Hence $s \geq 1$ and $\varphi$ has order $2$.\\

\textbf{Case (B2):} $\varphi$ has a fixed point on $C$ and has order $2$.\\
If $s \geq 1$ then we are done. So assume $s = 0$. Then $r\geq1$. If we choose $p_1$ as the origin then $\varphi$ is given by $z \longmapsto -z$. Suppose $R \in C$ is a fixed point of $\varphi$, different from $p_1$. There exists an eigenbasis for $\varphi$ of the form
\[
1,f_R \in \coho{0}{C,\mathcal{O}_{C}(p_1+R)}.
\]
Then $f_R$ must have a simple pole at $p_1$. We can look at a Laurent expansion of $f_R$ around the origin to conclude that $\varphi^*f_R = -f_R$.
We obtain an eigenbasis
\[
dz^{\otimes2},f_{p_2}dz^{\otimes2},...,f_{p_r}dz^{\otimes2} \in V,
\]
with eigenvalues $1,-1,...,-1$. Hence $\operatorname{age}(\varphi) = \frac{r-1}{2}$.
The only cases for which $\operatorname{age}\varphi < 1$ are the cases $(1)$ and $(2)$ of Table \ref{table: junior autos of smooth curves}. The eigenvalues of $\varphi$ are $1$ and $1,-1$ respectively.\\

\textbf{Case (B3):} $\varphi$ has a fixed point on $C$ and has order $3$.\\
In this case $\varphi$ can not swap a pair of points. Thus $s=0$ and $r \geq 1$. If we choose $p_1$ as the origin, then $\varphi$ is given by $z \longmapsto \zeta_3z$, where $\zeta_3$ is a primitive third root of unity. Suppose $r \geq 2$.
Let $1,f \in \coho{0}{C,\mathcal{O}_C(p_1+p_2)}$ be an eigenbasis of $\varphi$. Then $f$ has a simple pole at $p_1$ and looking at the Laurent expansion of $f$ around $p_1$, we see that $\varphi^*f = \zeta_3^{-1}f$. Then 
\[
dz^{\otimes2},fdz^{\otimes2} \in \coho{0}{C,\omega_C\left(p_1+p_2\right)}
\]
is an eigenbasis for $\varphi$ with eigenvalues $\zeta_3^2,\zeta_3$. This gives a contribution of $\frac{1}{3} + \frac{2}{3} = 1$. Thus $\operatorname{age}(\varphi)
\geq 1$ if $r \geq 2$. The only case left is case $(3)$ of Table \ref{table: junior autos of smooth curves}, with eigenvalues $\zeta_3^2$.\\

\textbf{Case (B4):} $\varphi$ has a fixed point $R_1 \in C$ and has order $6$.\\
Then $\varphi$ has only $R_1$ as fixed point, while $\varphi^2$ has two more fixed points $R_2$ and $R_3 = \varphi R_2$.
The set $\set{p_1,...,p_r,q_1,...,q_{2s}}$ equals $\set{R_1}$, $\set{R_2,R_3}$ or $\set{R_1,R_2,R_3}$. If we choose $R_1$ as the origin, then $\varphi$ is given by $z \longmapsto \zeta_6z$ where $\zeta_6$ is a primitive $6^{th}$ root of unity.\\ Just as in the case (B3) we can find a nonzero function
\[
f \in \coho{0}{C,\mathcal{O}_{C}(R_1+R_2)}
\]
such that $(\varphi^2)^*f = \zeta_6^{-2}f$. Set
\[
h_1 = f + \zeta_6^{-2}\varphi^*f, h_2 = f + \zeta_6^{-5}\varphi^*f.
\]
Then we obtain an eigenbasis
\[
dz^{\otimes2},h_1dz^{\otimes2},h_2dz^{\otimes2} \in \coho{0}{C,\omega_C^{\otimes2}(R_1+R_2+R_3)}
\]
of $\varphi$ with eigenvalues $\zeta_6^2,\zeta_6^4,\zeta_6$. Therefore if $\set{p_1,...,p_r,q_1,...,q_{2s}} = \set{R_1,R_2,R_3}$ then $\operatorname{age}(\varphi) \geq 1$. If $\set{p_1,...,p_r,q_1,...,q_{2s}} = \set{R_2,R_3}$ then $r=0$ and $g+s \leq 2$.
The only the case left is case $(4)$ of Table \ref{table: junior autos of smooth curves}, with eigenvalues $\zeta_6^2$.\\

\textbf{Case (B5):} $\varphi$ has a fixed point on $C$ and has order $4$.\\
Then $\varphi$ has exactly two fixed points $R_1,R_2 \in C$ and $\varphi^2$ has two more fixed points $R_3$ and $R_4=\varphi R_3$. The set $\set{p_1,...,p_r,q_1,...,q_{2s}}$ is one of the following: $\set{R_1},\set{R_1,R_2},\set{R_1,R_3,R_4}$ or $\set{R_1,R_2,R_3,R_4}$ (this is true up to swapping the roles of $R_1$ and $R_2$). If we choose $R_1$ as the origin, then $\varphi$ is given by $z \longmapsto \zeta_4z$ where $\zeta_4=\pm i$ is a primitive fourth root of unity.\\
Just as in case (B2), we can find a basis $1,f \in \coho{0}{C,\mathcal{O}_C(R_1+R_3)}$ such that $(\varphi^2)^*f = -f$. Set
\[
h_1 = f + \zeta_4 \varphi^*f, h_2 = f - \zeta_4 \varphi^*f.
\]
Then we obtain an eigenbasis
\[
dz^{\otimes2},h_1dz^{\otimes2},h_2dz^{\otimes2} \in \coho{0}{C,\omega_C^{\otimes2}\left(R_1 + R_3 + R_4\right)}
\]
for $\varphi$ with eigenvalues $-1,\zeta_4$ and $-\zeta_4$. Therefore if $\set{p_1,...,p_r,q_1,...,q_{2s}}$ is $\set{R_1,R_3,R_4}$ or $\set{R_1,R_2,R_3,R_4}$, then $\operatorname{age}(\varphi) \geq \frac{1}{2} + \frac{1}{4} + \frac{3}{4} = \frac{3}{2} \geq 1$.
If $\set{p_1,...,p_r,q_1,...,q_{2s}} = \set{R_3,R_4}$ then $r=0$ and $g+s \leq 2$.
We are left with only the cases $\set{p_1,...,p_r,q_1,...,q_{2s}} = \set{R_1}$ and $\set{p_1,...,p_r,q_1,...,q_{2s}} = \set{R_1,R_2}$, which correspond to $(5)$ and $(6)$ of Table \ref{table: junior autos of smooth curves}. We can argue as in case (B2) or (B3) to compute the eigenvalues in these cases.\\

\textbf{Case (C):} $C$ has genus $2$ and $\varphi$ is the hyperelliptic involution.\\
If $s \geq 1$ or $s=0$ and $r=0$ then we are done, so suppose $s=0$ and $r \geq 1$.
The hyperelliptic involution $\varphi$ acts trivially on $\coho{0}{C,\omega_C^{\otimes 2}}$, essentially because the line bundle $\omega_C^{\otimes2}$ induces the hyperelliptic involution $C \lra \pp^1$. 
For each $j \in \{1,...,r\}$, we can find a differential $\eta_j \in \coho{0}{C,\omega_C^{\otimes2}(p_j)}\backslash \coho{0}{C,\omega_C^{\otimes2}}$ that is also an eigenvector for $\varphi$.
By looking at a Laurent-expansion of $\eta_j$ around $p_j$ we see that the corresponding eigenvalue is $-1$.
This shows that the eigenvalues of $\varphi$ on $V$ are $1,1,1,-1,...,-1$ where $-1$ is repeated $r$ times. The only case left is $(7)$ from Table \ref{table: junior autos of smooth curves}.\\

\textbf{Case (D):} $C$ has genus $2$, admits a double cover $\pi:C \lra E$ of an elliptic curve $E$ and $\varphi$ is the associated involution.\\
If $s \geq 1$ or $s=0$ and $r=0$ then we are done, so suppose $s=0$ and $r \geq 1$. Then we can use the same argument as in case (C), together with the fact that $\varphi$ acts non-trivially on $\coho{0}{C,\omega_C^{\otimes2}}$ (see \cite[Chapter 11, Proposition 4.11]{ACG}) to conclude that $\operatorname{age}_V(\varphi) \geq 1$.

\textbf{Case (E):} $C$ has genus $3$, is hyperelliptic and $\varphi$ is the hyperelliptic involution.\\
If $s \geq 1$ then we are done. So suppose $s=0$.
We can find explicit equations for $C$: the curve is given by adding two points $\infty_1,\infty_2$ to the affine curve
\[
V(y^2-F(x)) \subset \mathbb{A}^2
\]
where $F \in \cc[x]$ has degree $8$ without double roots and $F(0) \neq 0$. In this case we have the explicit eigenbasis
\[
\frac{dx^{\otimes2}}{y},\frac{dx^{\otimes2}}{y^2},\frac{xdx^{\otimes2}}{y^2},\frac{x^2dx^{\otimes2}}{y^2},\frac{x^3dx^{\otimes2}}{y^2},\frac{x^4dx^{\otimes2}}{y^2} \in \coho{0}{C,\omega_C^{\otimes2}}
\]
for $\varphi$ with eigenvalues $-1,1,1,1,1,1$. If $r \geq 1$ then we conclude that $\operatorname{age}(\varphi) \geq 1$, similarly to case $(C)$ and $(D)$. The only case left is case $(8)$ of Table \ref{table: junior autos of smooth curves}.
\end{proof}

\begin{proposition}\label{prop:analysis of eigenvalues}
    Let $[C,p_1,...,p_n] \in \overline{\mathcal{M}}_{3,n}$ and let $\varphi$ be a non-trivial automorphism of the pointed curve $(C,p_1,...,p_n)$. Consider the age of $\varphi$ on the vectorspace 
    \[
    V=\coho{0}{C,\Omega_C \otimes \omega_C(p_1+...+p_n)}.
    \]
    Then either $\operatorname{age}(\varphi) \geq 1$, or $\varphi$ lifts to a deformation of $(C,p_1,...,p_n)$ with fewer nodes, or $n=0$ and $(C,\varphi)$ is hyperelliptic, or $C = C' \cup E$ has an elliptic tail $E$ without marked points, $\restr{\varphi}{C'} = \operatorname{id}$, and $\restr{\varphi}{E}$ has order $2,4$ or $6$. Furthermore, in the elliptic tail case, $\varphi$ has at most $2$ non-trivial eigenvalues on $V$.
\end{proposition}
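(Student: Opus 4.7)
The plan is to decompose the $\varphi$-representation $V$ via the short exact sequence \eqref{eq:SES of sheaves}: on dualizing and splitting by Maschke's theorem, $V$ becomes $\varphi$-equivariantly the direct sum of one line per node $P$ (corresponding to the smoothing parameter $\tau_P$) together with pieces $W_\alpha^*$, where $W_\alpha = \coho{0}{C_\alpha,\Omega_\alpha^{\otimes 2}(\sum_Q Q)}$. Since the age is additive over $\varphi$-invariant direct summands, I bound the node and component contributions independently.

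For the node contributions, a $\langle\varphi\rangle$-orbit of nodes of size $k\geq 2$ contributes age exactly $\tfrac{k-1}{2}$ (since the fractional parts of $s+j/k$ for $j=0,\ldots,k-1$ always sum to $\tfrac{k-1}{2}$ regardless of the overall twist $s$), so orbits of size $\geq 3$ already force age $\geq 1$ and 2-orbits contribute $\tfrac{1}{2}$. For a fixed node $P$, either the eigenvalue of $\varphi$ on $\tau_P$ is $1$ — in which case there is a $\varphi$-invariant deformation direction smoothing $P$, so $\varphi$ lifts to a deformation with strictly fewer nodes, giving the second alternative in the proposition — or the eigenvalue is a non-trivial root of unity, contributing positive age. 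Analogously, $\langle\varphi\rangle$ permutes the components $C_\alpha$, and an orbit of size $\geq 2$ on components with $W_\alpha\neq 0$ contributes age $\geq\tfrac{1}{2}$, so I may assume $\varphi$ fixes every component. On each fixed $C_\alpha$ the induced $\varphi_\alpha$ fixes the ordinary marked points and either fixes or swaps in pairs the branches at nodes, so Lemma \ref{lemma:age on smooth pointed curve} applies: either $\operatorname{age}_{W_\alpha}(\varphi_\alpha)\geq 1$, or $(C_\alpha,\varphi_\alpha)$ lies in one of the configurations of Table \ref{table: junior autos of smooth curves}, or is a degenerate low-genus case.

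The final step is a finite case analysis constrained by $\sum_\alpha g(C_\alpha)+b_1(\Gamma)=3$ and stability. If $C$ is smooth the lemma forces case $(8)$, i.e.\ $g=3$ hyperelliptic with $n=0$. If $C$ is nodal, the only surviving configuration is an elliptic tail $C=C'\cup E$ with no marked points on $E$ and $\varphi|_{C'}=\mathrm{id}$: the smoothing parameter $\tau_P$ at the gluing node has eigenvalue equal to the derivative of $\varphi|_E$ at the gluing point (since the branch on $C'$ is fixed to first order), hence of order $\operatorname{ord}(\varphi|_E)\in\{2,3,4,6\}$, and combining with the $W_E$-contribution read off from Table \ref{table: junior autos of smooth curves} yields total age $\tfrac{1}{2},\,1,\,\tfrac{3}{4},\,\tfrac{1}{2}$ for orders $2,3,4,6$ respectively. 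Order $3$ is excluded, and orders $2,4,6$ each produce exactly two non-trivial eigenvalues of $\varphi$ on $V$, proving the final claim. The main obstacle is the combinatorial bookkeeping in the dual-graph case analysis: at each node one must correctly coordinate the tangent action of $\varphi$ on the two branches with the eigenvalues on the adjacent $W_\alpha$'s from the lemma, and handle components carrying several nodes carefully so that no small-age configuration is missed.
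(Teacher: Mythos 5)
Your strategy matches the paper's: decompose $V$ via the short exact sequence \eqref{eq:SES of sheaves} into node and component pieces, bound the age contributions of $\langle\varphi\rangle$-orbits, reduce to Lemma~\ref{lemma:age on smooth pointed curve} on fixed components, and finish with a case analysis on the dual graph. Two things are off.

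First, a factual slip: you assert that an orbit of $k\geq 2$ nodes contributes age \emph{exactly} $\tfrac{k-1}{2}$ because ``the fractional parts of $s+j/k$ for $j=0,\ldots,k-1$ always sum to $\tfrac{k-1}{2}$ regardless of the overall twist $s$.'' That is false (take $k=2$, $s=\tfrac14$: the fractional parts sum to $1$). What actually happens, as in the paper's eigenvalue computation, is a dichotomy: either $\varphi^k$ acts trivially on the smoothing parameter, in which case the averaged vector $\bar e = e + \varphi e + \cdots + \varphi^{k-1}e$ is $\varphi$-invariant and you land in the ``fewer nodes'' alternative (you invoke this for fixed nodes but not for larger orbits); or $\varphi^k$ acts nontrivially, and the contribution is $\tfrac{k\tilde l}{N}+\tfrac{k-1}{2}>\tfrac{k-1}{2}$. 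Your downstream inequalities survive, but the stated reason does not, and you must supply the orbit-version of the ``fewer nodes'' escape.

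Second, and more seriously: the claim ``if $C$ is nodal, the only surviving configuration is an elliptic tail $C=C'\cup E$ with no marked points on $E$ and $\varphi|_{C'}=\mathrm{id}$'' is exactly what needs to be proven, and you offer no argument beyond calling it ``combinatorial bookkeeping.'' That bookkeeping is the bulk of the paper's proof. Two concrete holes: (i) a $\varphi$-orbit of size $\geq 2$ among components with $W_\alpha=0$ (rational with three special points) is not handled by your ``$W_\alpha\neq 0$'' dichotomy and needs a separate argument — the paper shows that either the genus drops below $3$, a contradiction, or $\varphi^2=\mathrm{id}$ on those components so $\varphi$ acts trivially on a smoothing parameter and one again lands in the ``fewer nodes'' alternative; (ii) once $\varphi$ fixes every component and node, one must combine the rows of Table~\ref{table: junior autos of smooth curves} across adjacent components, adding the node contribution $\geq \tfrac{1}{n_P}$ at each joint, to systematically rule out cases $(2),(6),(7)$ and all pairings of $(1),(3),(4),(5)$ except the one where the other side acts trivially. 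Absent that case analysis, the proposition is not established.
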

\begin{proof}
Suppose $C$ is smooth. By Proposition \ref{lemma:age on smooth pointed curve}, we have that either $\operatorname{age}(\varphi) \geq 1$, or $n=0$ and $(C,\varphi)$ is hyperelliptic.
Thus we may assume that $C$ is not smooth.
We will prove the proposition by splitting in many cases.
Let $\tilde{C}_1,...,\tilde{C}_\nu$ be the irreducible components of $C$ and let $C_\alpha \lra \tilde{C}_\alpha$ be the normalization. Let $g_\alpha$ be the genus of $C_\alpha$, let $\delta_\alpha$ the number of special points of $C_\alpha$, and let $\delta$ be the number of nodes of $C$. Note that by stability of $C$ we have $2g_\alpha - 3 + \delta_\alpha \geq 0$ for all $\alpha =1,...,\nu$. From the equality
\[
3 = g = \sum_\alpha g_\alpha + \delta + 1 - \nu
\]
we deduce that
\begin{equation}\label{eq:combinatorial bound on number of components}
    \nu  \leq \nu + \sum_\alpha \Delta_\alpha = 2g-2 + n = 4 + n,
\end{equation}
where $\Delta_\alpha = 2g_\alpha - 3 + \delta_\alpha$.
Note also that each component of $C$ has a node, because $C$ is not smooth. In particular $\delta_\alpha \geq 1$ for each $\alpha$.
A fundamental tool in the analysis of the eigenvalues of $\varphi$ on $V=\coho{0}{\Omega_C \otimes \omega_C\left(p_1+...+p_n\right)}$ is the short exact sequence
\begin{equation}\label{eq:fundamental short exact sequence}
    0 \lra \bigoplus_P \coho{0}{C,\cc_P} \lra \coho{0}{C,\Omega_C \otimes\omega_C\left(p_1+...+p_n\right)} \lra \bigoplus_\alpha \coho{0}{C,\Omega_\alpha^{\otimes 2}\left(\sum_Q Q\right)} \lra 0,
\end{equation}
obtained from \eqref{eq:SES of sheaves} by taking global sections.
The eigenvalues of $\varphi$ on $V$ are the union of the eigenvalues of $\varphi$ on the left and right terms of \eqref{eq:fundamental short exact sequence}.\\
Following \cite{HM} we start with a study of the eigenvalues of $\varphi$ on the vectorspace $\bigoplus_P\coho{0}{C,\cc_P}$.
Let $P \in C$ be a node and let $m = m_P$ be the length of the orbit of $P$ under $\varphi$.
This means that $P,\varphi P,...,\varphi^{m-1} P$ are distinct, but $\varphi^m P = P$. Then $\varphi^m$ acts on $\coho{0}{C,\cc_P} \simeq \cc$. Take a nonzero element $e \in \coho{0}{C,\cc_P}$, corresponding to a smoothing parameter for $P$.\\
Suppose that $\varphi^m$ acts as the identity on $\coho{0}{C,\cc_P}$. Consider the nonzero vector
\[
\bar{e} = e + \varphi e + \cdots + \varphi^{m-1}e \in \bigoplus_{P'} \coho{0}{C,\cc_{P'}} \subseteq \coho{0}{C,\Omega_C \otimes \omega_C\left(p_1+...+p_n\right)}.
\]
Then $\varphi \bar{e} = \bar{e}$ and therefore $\bar{e}$ induces a deformation with fewer nodes to which $\varphi$ lifts. So in this case we are done.\\
From now on we may therefore assume that $\varphi^{m_P}$ acts non-trivially on $\coho{0}{C,\cc_P}$ for any node $P$. In this case we have
\[
\varphi^{m}e = \zeta^{ml}e
\]
for some $l \in \set{1,...,\frac{N}{m}-1}$, where $\zeta = e^{2\pi i/N}$ and $N = \operatorname{ord}(\varphi)$.
For each $a \in \{0,...,N-1\}$, consider the nonzero vector
\[
\bar{e}_a = \sum_{j=0}^{m-1} \zeta^{-aj}\varphi^je \in \bigoplus_{j=0}^{m-1} \coho{0}{C,\cc_{\varphi^jP}}
\]
We compute
\[
\begin{array}{rl}
    \varphi \bar{e}_a = & \sum_{j=0}^{m-1} \zeta^{-aj}\varphi^{j+1}e \\
    = & \sum_{j=1}^{m-1} \zeta^{-a(j-1)}\varphi^{j}e  + \zeta^{-a(m-1)} \varphi^{m}e\\
    = & \zeta^{a}\left( \sum_{j=1}^{m-1} \zeta^{-aj}\varphi^{j}e  + \zeta^{-am} \varphi^{m}e \right)\\
    = & \zeta^a \left( \zeta^{(-a+l)m}e + \sum_{j=1}^{m-1} \zeta^{-aj}\varphi^{j}e \right).
\end{array}
\]
Therefore $\bar{e}_a$ is an eigenvector of $\varphi$ with eigenvalue $\zeta^a$ whenever $a \equiv l \pmod{\frac{N}{m}}$, i.e. whenever $a \in \set{l,l+\frac{N}{m},l+2\frac{N}{m},...,l+(m-1)\frac{N}{m}}$. Therefore the eigenvalues of $\varphi$ on $\bigoplus_{j=0}^{m-1}\coho{0}{C,\cc_{\varphi^jP}}$ contribute
\[
\frac{ml}{N} + \frac{m-1}{2} \geq \frac{m}{N} + \frac{m-1}{2}
\]
to $\operatorname{age}(\varphi)$, where $\tilde{l}$ is the residue of $l$ modulo $\frac{m}{N}$.
This implies that $\operatorname{age}(\varphi) \geq 1$ when $m \geq 3$. If $m_P=m_{P'}=2$ for two nodes $P,P'$ that are on different orbits, then we also get $\operatorname{age}(\varphi) \geq 1$. The only cases left to consider are:
\begin{itemize}
    \item[(a)] $\varphi$ fixes all nodes of $C$;
    \item[(b)] $\varphi$ swaps two nodes of $C$, but fixes all other nodes of $C$ (in this case we have a contribution of at least $\frac{1}{2}$ from these two nodes).
\end{itemize}
Now we investigate the eigenvalues of $\varphi$ on $\bigoplus_\alpha \coho{0}{C_\alpha,\Omega_\alpha^{\otimes 2}\left(\sum_Q Q\right)}$.\\
Let $\alpha \in \set{1,...,\nu}$. Let $m = m_\alpha$ be the length of the orbit of $C_\alpha$ under $\varphi$.
Then $\varphi^m$ acts on $\coho{0}{C_\alpha,\Omega_\alpha^{\otimes 2}\left(\sum_Q Q\right)}$.
The space $\coho{0}{C_\alpha,\Omega_\alpha^{\otimes 2}\left(\sum_QQ\right)}$ has dimension $ 3g_\alpha - 3 + \delta_\alpha = g_\alpha + \Delta_\alpha$ by Riemann-Roch.
If $\varphi^m$ has eigenvalues $\zeta^{ml_i}$ ($i \in \set{1,...,g_\alpha+\Delta_\alpha}$ and $l_i \in \set{0,...,\frac{N}{m}-1}$) on $\coho{0}{C_\alpha,\Omega_\alpha^{\otimes 2}\left(\sum_Q Q\right)}$ then by essentially the same computations as before, we get that $\varphi$ has eigenvalues
\[
\zeta^{l_i + j\frac{m}{n}}, i \in \set{1,...,g_\alpha+\Delta_\alpha}, j \in \set{0,...,m-1}
\]
on $\bigoplus_{j=0}^{m-1}\coho{0}{\varphi^jC_\alpha,\Omega_{\varphi^jC_\alpha}^{\otimes 2}\left(\sum_Q Q\right)}$. This gives a contribution of at least
\[
(g_\alpha+\Delta_\alpha) \frac{m-1}{2}
\]
to $\operatorname{age}(\varphi)$.\\
Suppose $m\geq2$ and $g_\alpha + \Delta_\alpha \geq 2$. Then $\operatorname{age}(\varphi) \geq 1$.\\
Suppose $m \geq 2, g_\alpha + \Delta_\alpha=1$ and case $(b)$ holds. Then also $\operatorname{age}(\varphi) \geq 1$, because case $(b)$ gives a contribution $\frac{1}{2}$ and $(g_\alpha+\Delta_\alpha )\frac{m-1}{2} \geq \frac{1}{2}$.\\
Suppose $m \geq 2, g_\alpha + \Delta_\alpha =1$ and case $(a)$ holds. Then $\varphi \tilde{C_\alpha}$ passes through all the nodes on $\tilde{C}_\alpha$ and vice versa (because $\varphi$ fixes all nodes.) It follows that $C$ has only two irreducible components $\tilde{C}_\alpha$ and $\tilde{C}_\beta$. From \eqref{eq:combinatorial bound on number of components} we get
\[
4 + n = \nu + \Delta_\alpha + \Delta_\beta \leq \nu + g_\alpha + \Delta_\alpha + g_\beta + \Delta_\beta = 2 + 1 + 1 = 4.
\]
The only possibility is that $n = 0$, $g_\alpha = g_\beta =0$ and $\delta_\alpha = \delta_\beta = 4$. Since $\varphi$ fixes all nodes, $\varphi^2$ fixes all inverse images of nodes. Since there are $4$ such points on each component of $C$, and both components are rational, we see that $\varphi^2 = \operatorname{id}$.
Now take a node $P \in C$. Around $P$ we can find coordinates such that the curve $C$ is given by the equation $xy = 0$ and $\varphi$ is given by
\[
\varphi^*x=y, \varphi^*y=x.
\]
Then $\varphi$ acts trivially on a smoothing parameter for $P$.
Thus $\varphi$ extends to a deformation of $C$ with fewer nodes. This finishes the proof in the case that $m \geq 2$ and $g_\alpha+\Delta_\alpha = 1$.\\
Suppose that $m \geq 2$, $g_\alpha + \Delta_\alpha = 0$ and all nodes on $\tilde{C}_\alpha$ are fixed by $\varphi$. Then by the same argument, $C$ has only two components $\tilde{C}_\alpha$ and $\tilde{C}_\beta$ swapped by $\varphi$. But then
\[
4 + n = \nu + \Delta_\alpha + \Delta_\beta \leq \nu + g_\alpha + \Delta_\alpha + g_\beta + \Delta_\beta = 2 + 0 + 0 = 2.
\]
This is a contradiction, so this case doesn't occur (another way to get a contradiction: the curve described above has genus less than $3$).\\
Suppose that $m \geq 2$, $g_\alpha + \Delta_\alpha = 0$ and some node $P$ on $\tilde{C}_\alpha$ is not fixed by $\varphi$. We must have $g_\alpha =0, \delta_\alpha = 3$. Denote the image $\varphi \tilde{C}_\alpha$ by $\tilde{C}_\beta$. If $C_\alpha$ self-intersects at $P$, then $\varphi P$ is not on $\tilde{C}_\alpha$ (because $\delta_\alpha =3$). Then there is some node $P'$ on $\tilde{C}_\alpha$ that is fixed by $\varphi$. The only option is that $C_\alpha$ and $C_\beta$ intersect at this point. Since $C$ is connected, we must then have that $C$ has only two irreducible components. Then $C$ has genus $2$, a contradiction.\\
Thus $C_\alpha$ meets another component $C_\gamma$ at $P$ ($\beta$ and $\gamma$ may or may not be the same).
Since $\delta_\alpha = 3$, there is a node $P'$ on $\tilde{C}_\alpha$ different from $P$ and $\varphi P$. Hence $P'$ is fixed by $\varphi$. The only option is that $C_\alpha$ and $C_\beta$ intersect at $P'$ and $\varphi \tilde{C}_\beta = \tilde{C}_\alpha$.
Then $\varphi^2$ fixes all the inverse images of the nodes on $C_\alpha$ and $C_\beta$. It follows that $\varphi^2$ is the identity on $\tilde{C}_\alpha \cup \tilde{C}_\beta$. And now we can find a deformation of $C$ with fewer nodes to which $\varphi$ lifts, since $\varphi$ acts trivially on the smoothing parameter for $P$.\\

We have now proven the theorem for all cases where $m = m_\alpha \geq 2$, so from now on we may assume that $m_\alpha = 1$ for all $\alpha$. In other words, $\varphi$ sends all components of $C$ to themselves. Denote by $\varphi_\alpha$ the automorphism of $C_\alpha$ induced by $\varphi$ and set $N_\alpha = \operatorname{ord}\varphi_\alpha$.\\
By the first proposition of \cite[Section 1]{HM} the age of $\varphi$ on $W = \coho{0}{C_\alpha,\Omega_\alpha^{2}}$ is at least $1$, except in the following cases:
\begin{itemize}
    \item[(A)] $\varphi_\alpha = \operatorname{id}$.
    \item[(B)] $g_\alpha = 0$.
    \item[(C)] $g_\alpha = 1$.
    \item[(D)] $g_\alpha = 2$ and $\varphi_\alpha$ is the hyperelliptic involution.
    \item[(E)] $g_\alpha = 2$, $C_\alpha$ is a double cover of an elliptic curve and $\varphi_\alpha$ is the associated involution.
    \item[(F)] $g_\alpha = 3$, $C_\alpha$ is hyperelliptic and $\varphi$ is the hypperelliptic involution.
\end{itemize}
Since $W$ is a subspace of $V$, we have $\operatorname{age}_V(\varphi) \geq \operatorname{age}_W(\varphi)$. We may therefore assume that for each $\alpha$, one of the statements $(A)-(F)$ is satisfied.\\

Note that $\varphi^2$ fixes all nodes of $C$ and therefore $\varphi^4$ fixes all special points of $C_\alpha$.
In case $(B)$ this implies $\varphi_\alpha^4 = \operatorname{id}$ because $\delta_\alpha \geq 3$.
In case $(C)$ either $\varphi_\alpha$ fixes some point or $\varphi_\alpha$ is a translation with respect to the group structure on $C_\alpha$. In the first case $N_\alpha \in \set{1,2,3,4,6}$. In the second case $\varphi_\alpha^4 = \operatorname{id}$ because $\varphi_\alpha^4$ is again a translation and has a fixed point.
In cases $(D),(E)$ and $(F)$ we have $N_\alpha = 2$.\\
Thus in any case we have $N_\alpha \in \set{1,2,3,4,6}$.\\
We will now complete the proof in case $(b)$, i.e. when $\varphi$ moves some node $P \in \tilde{C}_\alpha$. We split this in two sub-cases:
\begin{itemize}
    \item[(b1)] The component $\tilde{C}_\alpha$ self-intersects in $P$.
    \item[(b2)] The component $\tilde{C}_\alpha$ intersects another component $\tilde{C}_\beta$ in $P$.
\end{itemize}

\textbf{Case (b1):} Recall that the eigenvalues of $\varphi$ on $\coho{0}{C,\cc_P} \oplus \coho{0}{C,\cc_{\varphi P}}$ contribute
\[
\frac{m_Pl}{N} + \frac{m_P-1}{2} = \frac{2l}{N} + \frac{1}{2}
\]
to $\operatorname{age}(\varphi)$, where $\varphi^2$ acts as multiplication by $\zeta^{2l}$ on $\coho{0}{C,\cc_P}$, and $N = \operatorname{ord}(\varphi)$.
Note that $\varphi^{N_\alpha}$ acts as the identity on a neighborhood of $P$ and therefore $\frac{N}{N_\alpha}$ divides $l$. It follows that the contribution is at least
\[
\frac{2}{N_\alpha} + \frac{1}{2}.
\]
If $N_\alpha \leq 4$, then we obtain $\operatorname{age}(\varphi) \geq 1$.
If $N_\alpha = 6$, then $C_\alpha$ has genus $1$, and $\varphi$ acts as a third root of unity on $dz^{\otimes 2} \in \coho{0}{C_\alpha,\Omega_\alpha^{\otimes2}}$. This adds a contribution of at least $\frac{1}{3}$ to $\operatorname{age}(\varphi)$. Hence in this case
\[
\operatorname{age}(\varphi) \geq \frac{2}{N_\alpha} + \frac{1}{2} + \frac{1}{3} = \frac{7}{6} \geq 1.
\]

\textbf{Case (b2):} In this case we similarly get a contribution of at least
\[
\frac{2}{\operatorname{lcm}(N_\alpha,N_\beta)} + \frac{1}{2}
\]
from $\coho{0}{C,\cc_P} \oplus \coho{0}{C,\cc_{\varphi P}}$ to $\operatorname{age}(\varphi)$. If $\operatorname{lcm}(N_\alpha,N_\beta) \leq 4$, then $\operatorname{age}(\varphi) \geq 1$.
Suppose that $\operatorname{lcm}(N_\alpha,N_\beta) > 4$. Then (possibly after swapping $\alpha$ and $\beta$) we may assume that $N_\alpha \in \set{3,6}$.
Then $C_\alpha$ has genus $1$ and $\varphi$ acts as a third root of unity on $dz^{\otimes 2} \in \coho{0}{C_\alpha,\Omega_\alpha}$. Since $\operatorname{lcm}(N_\alpha,N_\beta) \leq 12$ we get
\[
\operatorname{age}(\varphi) \geq \frac{2}{\operatorname{lcm}(N_\alpha,N_\beta)} + \frac{1}{2} + \frac{1}{3} \geq \frac{2}{12} + \frac{1}{2} + \frac{1}{3} = 1.
\]
This finishes the proof in case $(b)$.\\

We are left to prove the proposition in the case that $\varphi$ fixes all components of $C$ and all nodes of $C$.
Fix a component $C_\alpha$ of $C$.
Then $\varphi_\alpha^2$ fixes all special points of $C_\alpha$.
Let $x_1,...,x_r,y_1,...,y_{2s}$ be the special points of $C_\alpha$, ordered so that  $x_1,...,x_r$ are fixed by $\varphi_\alpha$ and $y_1,...,y_{2s}$ are swapped in pairs.
If the age of $\varphi_\alpha$ on $\coho{0}{C,\omega_C^{\otimes 2}\left(\sum_{i=1}^r p_i + \sum_{j=1}^{2s} q_j \right)}$ is at least $1$, then we are done.
If $s \geq 1$ and $\varphi_\alpha$ has order $2$, then we can find a deformation of $(C,p_1,...,p_n)$ to which $\varphi$ lifts, and in which the node corresponding to $y_1,y_2$ disappears.
If $r=0$ then $C$ has only one components of geometric genus $g_\alpha$ with $s$ nodes. Then $C$ has genus $g_\alpha + s$, so $g_\alpha + s = 3$. 
By lemma \ref{lemma:age on smooth pointed curve} applied to $(C_\alpha,x_1,...,x_r,y_1,...,y_{2s})$ and $\varphi_\alpha$, we only have to consider the case that for each $\alpha$ either $\varphi_\alpha = \operatorname{id}$ (we call this case $(0)$), or the data $(C_\alpha,x_1,...,x_r,y_1,...,y_{2s},\varphi_\alpha)$ is as described in one of the cases $(1)-(8)$ in Table \ref{table: junior autos of smooth curves}. If case $(8)$ occurs, then $\delta_\alpha = r + 2s = 0$. Then $C=C_\alpha$ is smooth, but we already excluded this case.\\

Before continuing, we note that each node $P$ contributes at least $\frac{1}{n_P}$ to $\operatorname{age}(\varphi)$, where $n_P$ is the order of $\varphi$ when restricted to the components of $C$ that intersect at $P$. In all the cases $(1)-(7)$ we have $s=0$ and hence $r = \delta_\alpha$. Each component $C_\alpha$ also has an inverse image $x_i$ of a node $P \in C$, and we reorder the points $x_1,...,x_r$ such that $x_1$ is mapped to the node $P$.\\

Suppose that case $(7)$ occurs for $C_\alpha$. Let $C_\beta$ be the other component of $C$ that intersects $C_\alpha$ at $P$. Case $(7)$ contributes $\frac{1}{2}$ and $P$ contributes $\frac{1}{n_P} = \frac{1}{\operatorname{lcm}(2,N_\beta)}$, so if $N_\beta \leq 2$ then $\operatorname{age}(\varphi) \geq 1$. So suppose $N_\beta > 2$. Then $(C_\beta,\varphi_\beta)$ is as described in one of the cases $(3)-(6)$. In all these cases $C_\beta$ contributes at least $\frac{1}{3}$ and $n_P \leq 6$. Thus
\[
\operatorname{age}(\varphi) \geq \frac{1}{2} + \frac{1}{3} + \frac{1}{6} = 1.
\]
Thus we may exclude case $(7)$.

Suppose that case $(6)$ occurs. Then $C_\alpha$ contributes $\frac{3}{4}$ to $\operatorname{age}(\varphi)$. Let $C_\beta$ be the component that intersects $C_\alpha$ at $P$ (we don't exclude the case $\alpha =\beta$).
If $n_{P} \leq 4$ then $\operatorname{age}(\varphi) \geq 1$. Suppose $n_P > 4$. The only option is that $(C_\beta,\varphi_\beta)$ is as described in case $(3)$ or $(4)$. But then $C_\beta$ contributes at least $\frac{1}{3}$, thus $\operatorname{age}(\varphi) \geq \frac{3}{4} + \frac{1}{3} \geq 1$.
Thus we may exclude case $(6)$.

Suppose that case $(2)$ occurs. Then $C_\alpha$ contributes $\frac{1}{2}$. Let $C_\beta$ be the other component passing through $P$. Note that $\beta \neq \alpha$ because otherwise $C$ would have genus $2$. In all the cases $(0),(1),(2),(3),(4),(5)$ for $C_\beta$ we get that
\[
\operatorname{age}(\varphi) \geq \frac{1}{2} + \frac{1}{\operatorname{lcm}(2,N_\beta)} + \text{ contribution of } C_\beta \geq 1.
\]
Thus we may exclude case $(2)$.\\
Suppose that one of the cases $(1),(3),(4)$ or $(5)$ occurs. Let $C_\beta$ be the other component passing through $P$. If $(C_\beta,\varphi_\beta)$ is as described in case $(1),(3),(4)$ or $(5)$, then we get that $C = \tilde{C}_\alpha \cup \tilde{C}_\beta$ has genus $2$, a contradiction. The only case left for $\beta$ is case $(0)$, i.e. $\varphi_\beta = \operatorname{id}$. Now one checks that the contribution of $C_\alpha$ and the contribution $\frac{1}{n_P}$ add up to at least $\frac{1}{2}$.
Either $\varphi = \operatorname{id}$ on $C \backslash \tilde{C}_\alpha$ or one of the cases $(1),(3),(4)$ or $(5)$ occurs again. In the latter case we get another contribution of at least $\frac{1}{2}$ and then $\operatorname{age}(\varphi) \geq 1$. In the former cases $P$ is the only node of $C$ (if there is another node $P'$ then $\varphi$ acts trivially on $\coho{0}{C,\cc_{P'}}$ which we assumed not to be the case).
Thus the only case left is that $C$ has two irreducible components $C_\alpha,C_\beta$ that are smooth, intersect only at $P$, and have genus $1$ and $2$ respectively, $\varphi_\beta = \operatorname{id}$, and $C_\alpha$ has no marked points. Then $E = C_\alpha$ is an elliptic tail.
If $\operatorname{ord}(\varphi_\alpha) = 3$, then $\varphi$ has eigenvalues $\zeta_3$ and $\zeta_3^2$ on $\coho{0}{C,\cc_P}$ and $\coho{0}{C_\alpha,\Omega_\alpha^2}$ respectively, which implies that $\operatorname{age}(\varphi) \geq 1$.
To finish the proof, we observe that if $\operatorname{ord}\varphi_\alpha \in \set{2,4,6}$, then by the explicit list of eigenvalues described in Table \ref{table: junior autos of smooth curves} we see that $\varphi$ has at most $2$ non-trivial eigenvalues on $V$.
\end{proof}

\begin{proposition}\label{curves admitting a junior automorphism}
    Suppose that $n \geq 1$. Let $[C,p_1,...,p_n] \in \overline{\mathcal{M}}_{3,n}$ and let $\varphi$ be a non-trivial automorphism of the $n$-pointed curve $(C,p_1,...,p_n)$. Consider the age of $\varphi$ on the vectorspace $V=\coho{0}{C,\Omega_C \otimes \omega_C(p_1+...+p_n)}$. If $\varphi$ is junior, then $[C,p_1,...,p_n] \in \Delta_{1,\emptyset}$.
\end{proposition}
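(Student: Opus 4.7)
The plan is to apply Proposition \ref{prop:analysis of eigenvalues} to $(C, p_1, \ldots, p_n, \varphi)$ and argue by induction on the number $\delta$ of nodes of $C$, using that $\Delta_{1,\emptyset} \subseteq \overline{\mathcal{M}}_{3,n}$ is closed. The four alternatives provided by Proposition \ref{prop:analysis of eigenvalues} are tailored precisely to this purpose: three of them can be ruled out or give the desired conclusion outright, and the fourth (``$\varphi$ lifts to a deformation with fewer nodes'') is the natural induction hypothesis input.

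For the base case $\delta = 0$, Proposition \ref{prop:analysis of eigenvalues} yields that either $\operatorname{age}(\varphi) \geq 1$ or else $n = 0$ and $(C,\varphi)$ is hyperelliptic; both are ruled out by the assumptions that $\varphi$ is junior and $n \geq 1$, so no such $\varphi$ exists and the statement holds vacuously. For $\delta \geq 1$, I apply Proposition \ref{prop:analysis of eigenvalues} again: the alternative $\operatorname{age}(\varphi) \geq 1$ contradicts junior-ness, the hyperelliptic alternative requires $n = 0$, and the elliptic tail alternative places $[C,p_1,\ldots,p_n] \in \Delta_{1,\emptyset}$ by definition of that boundary divisor. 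What remains is the alternative in which $\varphi$ lifts to a one-parameter deformation $\pi \colon \mathcal{C} \to T$ with $\mathcal{C}_0 = C$ and with $\mathcal{C}_t$ having strictly fewer nodes than $C$ for $t \neq 0$.

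To handle this remaining alternative, I examine the restriction $\varphi_t$ of the lift of $\varphi$ to a nearby fiber $\mathcal{C}_t$. By construction (see the argument involving the vector $\bar{e}$ in the proof of Proposition \ref{prop:analysis of eigenvalues}), the smoothed direction in $V(C)$ corresponds to an eigenvector of $\varphi$ with eigenvalue $1$, so passing from $V(C)$ to the deformation space $V(\mathcal{C}_t)$ just drops a trivial eigenvalue and gives $\operatorname{age}(\varphi_t) = \operatorname{age}(\varphi) \in (0,1)$; in particular, $\varphi_t$ remains junior on $V(\mathcal{C}_t)$. All non-unit eigenvalues of $\varphi$ survive on $V(\mathcal{C}_t)$, so $\varphi_t$ acts non-trivially there, and, since automorphisms of stable pointed curves act faithfully on the infinitesimal deformation space, $\varphi_t$ is a non-trivial automorphism of $\mathcal{C}_t$. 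The induction hypothesis then gives $[\mathcal{C}_t, p_1, \ldots, p_n] \in \Delta_{1,\emptyset}$ for all small $t \neq 0$, and closedness of $\Delta_{1,\emptyset}$ in $\overline{\mathcal{M}}_{3,n}$ yields $[C,p_1,\ldots,p_n] \in \Delta_{1,\emptyset}$ by taking the limit $t \to 0$.

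The main technical point to verify is the transfer of non-triviality of $\varphi$ to the deformed curve $\mathcal{C}_t$; this ultimately rests on the faithfulness of the action of $\operatorname{Aut}(C,p_1,\ldots,p_n)$ on the infinitesimal deformations, which is standard for stable pointed curves with $2g-2+n > 0$. Apart from this, the proof is a bookkeeping exercise that extracts the desired conclusion from the case analysis already performed in Proposition \ref{prop:analysis of eigenvalues}, together with the closedness of the boundary divisor.
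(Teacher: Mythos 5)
Your proof is correct and follows the same inductive strategy on the number of nodes that the paper uses; the paper runs it in the contrapositive (assume $[C]\notin\Delta_{1,\emptyset}$, prove $\operatorname{age}(\varphi)\geq 1$), while you run it in the direct direction (assume $\varphi$ junior, conclude $[C]\in\Delta_{1,\emptyset}$). The two formulations are logically equivalent and both rely on the closedness of $\Delta_{1,\emptyset}$ --- the paper uses it implicitly to ensure the inductive hypothesis applies to a nearby fiber of the deformation, you use it explicitly to pass from the nearby fiber back to the central one. Your explicit remark that the lift $\varphi_t$ remains a non-trivial automorphism, by faithfulness of the action of $\operatorname{Aut}(\mathcal{C}_t,\ldots)$ on the deformation space, is a worthwhile detail that the paper leaves implicit.

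One small technical imprecision: you write that ``passing from $V(C)$ to the deformation space $V(\mathcal{C}_t)$ just drops a trivial eigenvalue.'' That is not quite right: since $\overline{\mathcal{M}}_{3,n}$ is smooth of constant dimension $6+n$, the spaces $V(C)$ and $V(\mathcal{C}_t)$ have the \emph{same} dimension, so no eigenvalue is lost. The correct argument --- which is the one the paper gives and which yields your conclusion --- is that the eigenvalues of the lift of $\varphi$ on the fibers $V(\mathcal{C}_t)$ vary continuously in $t$ and are roots of unity of order dividing $\operatorname{ord}(\varphi)$, hence are locally constant; therefore $\operatorname{age}(\varphi_t) = \operatorname{age}(\varphi)$ and $\varphi_t$ is again junior. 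With that substitution, your proof goes through and matches the paper's.
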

\begin{proof}
    Suppose that $[C,p_1,...,p_n] \not\in \Delta_{1,\emptyset}$. We show that $\operatorname{age}(\varphi) \geq 1$ by induction on the number of nodes of $C$. If $C$ has no nodes, this follows from Proposition \ref{prop:analysis of eigenvalues}. For the general case, the same proposition implies that either $\operatorname{age}(\varphi) \geq 1$ or $\varphi$ lifts to a deformation with fewer nodes. Note that in the latter case the eigenvalues of $\varphi$ along the deformation vary continuously. Since these eigenvalues are roots of unity of order dividing $\operatorname{ord}(\varphi)$, we see that the eigenvalues are constant. Therefore the age is constant, and by the induction hypothesis applied to some fiber of the deformation, we get $\operatorname{age}(\varphi) \geq 1$.
\end{proof}

Let $J_0 \subseteq \overline{\mathcal{M}}_{3,n}$ be the locus of pointed curves that admit an elliptic tail of $j$-invariant $0$ without marked points. 
The following proposition is a local version of Proposition \ref{prop:condition to lift forms}, and implies Proposition \ref{prop:condition to lift forms}
 
\begin{proposition}\label{prop:lifting forms local version}
    Suppose that $n \geq 1$. Let $\eta$ be an $m$-canonical form defined on an open subset $U$ of $\Mbar{g}{n}$. If $U \cap J_0 \neq \emptyset$, we assume that the order of vanishing of $\eta$ along the divisor $\Delta_{1,\emptyset}$ is at least $m$.
    Then $\eta$ lifts to any resolution of singularities of $U$.
\end{proposition}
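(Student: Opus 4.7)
The strategy is a purely local analysis via the variants of the Reid--Shepherd-Barron--Tai criterion. At each $x = [C,p_1,\dots,p_n]\in U$, the analytic neighborhood in $\Mbar{3}{n}$ is modeled by $V/G$ with $V = \coho{0}{C,\Omega_C\otimes\omega_C(p_1+\dots+p_n)}^*$ and $G = \operatorname{Aut}(C,p_1,\dots,p_n)$, as recalled in Subsection~\ref{subsec:local description Mgn}. By the cyclic reduction lemma stated just before Proposition~\ref{Reid-Tai with vanishing}, $\eta$ lifts to a resolution in a neighborhood of $x$ provided that its pullback under $V/\langle g\rangle\to V/G$ lifts for every $g\in G$; I would carry out this check one cyclic subgroup at a time.

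Fix $g\in G$. If $g$ is not junior on $V$, I would extract the quasi-reflection subgroup of $\langle g\rangle$ as described after Proposition~\ref{Reid-Tai with vanishing}, so that the classical Reid--Shepherd-Barron--Tai criterion (Proposition~\ref{Reid-Tai OG}) applies to the induced action and yields canonical singularities, hence automatic lifting. If $g$ is junior, Proposition~\ref{curves admitting a junior automorphism} forces $x\in\Delta_{1,\emptyset}$, and Proposition~\ref{prop:analysis of eigenvalues} describes $g$ as the identity on $C'$ with $g|_E$ of order $N\in\{2,4,6\}$, where $C=C'\cup E$ for a single unmarked elliptic tail $E$. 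In particular $g$ has at most two non-trivial eigenvalues on $V$: one on the smoothing parameter $\tau_{E\cap C'}$ of the node $E\cap C'$, and (when $N>2$) one on the direction spanned by $dz^{\otimes 2}\in \coho{0}{E,\Omega_E^{\otimes 2}}$.

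The analysis then splits on $N$. For $N=2$ the element $g$ is a pure quasi-reflection, so $V/\langle g\rangle$ is smooth and $\eta$ lifts trivially. For $N=4$ one has $j(E)=1728$ and the non-trivial eigenvalues of $g$ on $V$ are $-1$ and $\pm i$, matching exactly the setup of Example~\ref{ex:j=1728}, whence $V/\langle g\rangle$ has canonical singularities. For $N=6$ we have $j(E)=0$, so $x\in J_0$ and the hypothesis of the proposition applies. The non-trivial eigenvalues are $\pm\zeta_6$ on $\tau_{E\cap C'}$ and $\zeta_6^2$ on $dz^{\otimes 2}$, placing us in Case~1 of Example~\ref{ex:j=0}. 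Since the coordinate $\tau_{E\cap C'}$ cuts out exactly $\Delta_{1,\emptyset}$ in the local model, the hypothesis $\operatorname{ord}_{\Delta_{1,\emptyset}}\eta\geq m$ translates into the vanishing condition $b_1\geq m$ required by Example~\ref{ex:j=0}, and the form lifts.

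The main obstacle is the bookkeeping in the $N=6$ case: one must verify carefully that the order of vanishing of $\eta$ along $\Delta_{1,\emptyset}\subseteq\Mbar{3}{n}$, defined on the coarse moduli space, correctly descends through the intermediate quotient by the quasi-reflection $g^3$ (which realizes the generic elliptic involution along $\Delta_{1,\emptyset}$) to the divisor-vanishing hypothesis of Example~\ref{ex:j=0}. Once this identification is in place, the three Reid--Tai variants together with the classification of junior automorphisms in Propositions~\ref{curves admitting a junior automorphism} and~\ref{prop:analysis of eigenvalues} exhaust all possibilities and yield the proposition.
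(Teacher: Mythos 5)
Your framework — cyclic reduction, quasi-reflection extraction, then the Reid--Tai variants and Examples~\ref{ex:j=1728} and \ref{ex:j=0} — matches the paper's, but there are two substantive gaps.

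First, the step \emph{``if $g$ is not junior, quasi-reflection-reduce and the classical criterion yields canonical singularities''} is not valid as stated. Proposition~\ref{Reid-Tai OG} requires that \emph{no} element of $\langle g\rangle$ modulo its quasi-reflections be junior; a power $g^k$ can be junior even when $g$ itself is not (e.g.\ $g$ with eigenvalues $(\zeta_6^5,\zeta_6^5,1,\dots,1)$ has age $5/3$ but $g^5$ has age $1/3$). The paper avoids this by not splitting on junior/non-junior at all: in its Case 2 it classifies the eigenvalue list of \emph{every} $\varphi\in G$ directly from the combinatorics of $(C,p_1,\dots,p_n)$ lying in $\Delta_{1,\emptyset}$ with a unique unmarked elliptic (or nodal rational) tail and order-$\leq 2$ automorphisms on the genus-$2$ part, finding one of the three lists $(\pm1,\dots,\pm1)$, $(\pm i,-1,\pm1,\dots,\pm1)$, $(\pm\zeta_6,\zeta_6^2,\pm1,\dots,\pm1)$; the examples then cover all of $G$ at once. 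You instead appeal to Proposition~\ref{prop:analysis of eigenvalues}, which only constrains the eigenvalues under the junior hypothesis and moreover allows the case ``$\varphi$ lifts to a deformation with fewer nodes'', which you do not address.

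Second, and more importantly, the eigenvalue classification underlying your $N\in\{2,4,6\}$ trichotomy simply fails on two exceptional loci that the paper must handle separately: the locus $X$ of curves with two isomorphic unmarked elliptic or nodal rational tails (an automorphism can permute them, and there is no longer a distinguished tail to read the eigenvalues off of), and the locus $Z$ of curves in $\Delta_{1,\emptyset}$ whose genus-$2$ part carries a marked-point-fixing automorphism of order $\geq 3$ (so $\varphi|_{C'}$ contributes extra non-trivial eigenvalues). Both have codimension $\geq 3$. The paper's Case 3 does not attempt a direct eigenvalue analysis there; instead it observes that the fixed locus of any junior $\varphi$ has codimension $\leq 2$ in the deformation space (by Proposition~\ref{prop:analysis of eigenvalues}, with the ``at most two non-trivial eigenvalues'' statement), hence meets the locus where lifting has already been established, and invokes the Harris--Mumford propagation Proposition~\ref{prop:lifting forms HM version}. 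Your proposal contains no analogue of this step, so it does not prove the statement at points of $X\cup Z$.
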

\begin{proof}
    Take a point $[C,p_1,...,p_n] \in U$.
    Let $V = \coho{0}{C,\Omega_C \otimes \omega_C(p_1,...,p_n)}$ and let $G = \operatorname{Aut}(C,p_1,...,p_n)$. Recall that a neighborhood of $[C,p_1,...,p_n] \in \overline{\mathcal{M}}_{3,n}$ is isomorphic to a neighborhood of the origin in $V^*/G$. 
    We will apply the different variations of the Reid-Shepherd-Barron-Tai criterion to deduce that $\eta$ lifts to a resolution of singularities of a neighborhood of the origin in $V^*/G$. This is enough to prove the proposition.
    We split in several cases, depending on wether the point lies in certain special loci of $\overline{\mathcal{M}}_{3,n}$. Let $X \subseteq \overline{\mathcal{M}}_{3,n}$ be the locus of curves admitting two distinct elliptic or nodal rational tails without marked points that are isomorphic. This locus is the image of a map
    \[
    \overline{\mathcal{M}}_{1,1} \times \overline{\mathcal{M}}_{1,n+2} \lra \overline{\mathcal{M}}_{3,n}
    \]
    and therefore has codimension at least $3$.
    Let $Y \subseteq \overline{\mathcal{M}}_{2,n+1}$ be the locus of pointed curves $(C',p_1,...,p_{n+1})$ that have an automorphism fixing the marked points of order at least $3$.
    Let $Z \subseteq \overline{\mathcal{M}}_{3,n}$ be the image of $Y \times \overline{\mathcal{M}}_{1,1}$ under the gluing map
    \[
    \overline{\mathcal{M}}_{2,n+1} \times \overline{\mathcal{M}}_{1,1} \lra \Delta_{1,\emptyset} \subset \overline{\mathcal{M}}_{3,n}.
    \]
    Note that $Y$ has codimension at least $2$ and therefore $Z$ has codimension at least $3$.
    
    \textbf{Case 1:} $[C,p_1,...,p_n] \not\in\Delta_{1,\emptyset}.$ In this case $G$ has no junior elements, by Proposition \ref{curves admitting a junior automorphism}, and hence $\eta$ lifts to a resolution of a neighborhood of the origin in $V^*/G$.

    \textbf{Case 2:} $[C,p_1,...,p_n] \in \Delta_{1,\emptyset}\backslash(X \cup Z)$.
    In this case
    \[
    (C,p_1,...,p_n)=(C',p_1,...,p_n,p)\cup_p (E,p)
    \]
    has an elliptic tail $E$ or rational tail $E$ with a node and all non-trivial automorphisms of $(C',p_1,...,p_n,p)$ have order $2$.
    Let $\varphi \in \operatorname{Aut}(C,p_1,...,p_n)$. Then $\varphi$ must map $E$ to $E$, since $(C,p_1,...,p_n)$ does not have two isomorphic elliptic or nodal rational tails. Therefore $\varphi$ sends $C'$ to $C'$, and $\restr{\varphi^2}{C'} = \operatorname{id}$.
    From the short exact sequence \eqref{eq:fundamental short exact sequence}, the list of eigenvalues of $\varphi$ on $V$ is one of the following, where $\zeta_k$ denotes a primitive $k$-root of unity:
    \[
    (\pm1,...,\pm1),(\pm i,-1,\pm1,...,\pm1) \text{ or } (\pm\zeta_6,\zeta_6^2,\pm1,...,\pm1).
    \]
    The order of $\restr{\varphi}{E}$ is $1$ or $2$, $4$, and $3$ or $6$ respectively.
    The list involving $\zeta_6$ can only occur when $E$ has $j$-invariant $0$, and in each case we have ordered the eigenvalues such that first eigenvalue corresponds to the action of $\varphi$ on a smoothing parameter $\tau_P$ for $P$. Note that $\Delta_{1,\varphi} \subseteq \Mbar{3}{n}$ contains the image of locus in $V^*$ where $\tau_P$ is zero.
    This means that in the notation of Lemma \ref{Reid-Tai with vanishing} we have $b_1 \geq m$ for the last list of eigenvalues.
    If the eigenvalues of $\varphi$ are all $\pm1$, then $V^*/\langle\varphi\rangle$ has canonical singularities. In the other cases, $\eta$ lifts to a resolution of singularities, by Examples \ref{ex:j=1728} and \ref{ex:j=0}. 
    This finishes the proof in case 2.\\

    \textbf{Case 3:} $[C,p_1,...,p_n]\in X\cup Z$. Let $W \subseteq U$ be the open subset consisting of points $x$ for which $\eta$ lifts to a pluri-canonical form on a resolution of singularities of a neighborhood of $x$. Let $\tilde{W}$ be the corresponding open subset of $V^*/G$. By the previous cases, $W$ contains $U\backslash (X \cup Z)$. In particular the codimension of the complement of $\tilde{W}$ in a small enough neighborhood of the origin of $V^*/G$ is at least $3$.
    Now suppose that $\varphi \in G$ is junior. Then the fixed locus of $\varphi$ in $V^*$ has codimension at most $2$, by Proposition \ref{prop:analysis of eigenvalues} and the fact that this codimension (which depends only on the eigenvalues) is constant along a deformation to which $\varphi$ lifts. In particular the fixed locus intersects $\tilde{W}$. The proof is finished by applying Proposition \ref{prop:lifting forms HM version}.
\end{proof}

As an interesting consequence, we can now easily determine the non-canonical locus of $\overline{\mathcal{M}}_{3,n}$.

\begin{corollary}\label{cor:non-canonical locus}
    Let $n \geq 1$. Then the locus of non-canonical singularities of $\Mbar{3}{n}$ equals $J_0$, the locus of pointed curves that admit an elliptic tail with $j$-invariant $0$ without marked points.
\end{corollary}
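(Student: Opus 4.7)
The corollary consists of two inclusions, and I would attack them separately. For the inclusion $\Mbar{3}{n}\setminus J_0 \subseteq$ canonical locus, the plan is to take a small affine open $U$ around the given point with $U\cap J_0 = \emptyset$. Proposition \ref{prop:lifting forms local version} then applies with vacuous hypothesis on $\Delta_{1,\emptyset}$, so every $m$-canonical form on $U$ lifts to any resolution of singularities. Using normality of $\Mbar{3}{n}$ to identify pluri-canonical forms on the regular locus with sections of $mK_U$, this is precisely the statement that $U$ has canonical singularities.

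For the opposite inclusion $J_0 \subseteq$ non-canonical locus, the plan is to apply the classical Reid--Shepherd-Barron--Tai criterion of Proposition \ref{Reid-Tai OG} to the full automorphism group. I would pick $[C,p_1,\dots,p_n]\in J_0$, write $C = C'\cup E$ with $E$ an elliptic curve of $j$-invariant $0$ attached to $C'$ at a node $P$ and carrying no marked point, and let $\varphi \in G := \mathrm{Aut}(C,p_1,\dots,p_n)$ be the automorphism that is the identity on $C'$ and a suitably chosen generator of $\mathrm{Aut}(E,P)\simeq \mathbb{Z}/6$ on $E$. By the short exact sequence \eqref{eq:fundamental short exact sequence} and the eigenvalue computation already carried out in Case 2 of Proposition \ref{prop:lifting forms local version}, $\varphi$ acts on the deformation space $V^*$ with eigenvalues $(\zeta_6,\zeta_6^2,1,\dots,1)$, where the first coordinate is the smoothing parameter $\tau_P$ and the second spans $\coho{0}{E,\omega_E^{\otimes 2}(P)}$; this places us exactly in Case 1 of Example \ref{ex:j=0}.

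Let $H\trianglelefteq G$ be the subgroup generated by quasi-reflections. The Reid--Shepherd-Barron--Tai criterion applied to the action of $G/H$ on $V^*/H$ reduces non-canonicity of $V^*/G = (V^*/H)/(G/H)$ to the existence of a junior element in $G/H$, and the candidate is the image $\bar\varphi$. Two points need to be verified: (i) $H\cap\langle\varphi\rangle = \langle\varphi^3\rangle$, so that $\bar\varphi$ is a non-trivial element of order $3$; and (ii) all quasi-reflections of $G$ outside $\langle\varphi^3\rangle$ come from $\mathrm{Aut}(C',p_1,\dots,p_n,P)$ and hence act trivially on the summand $V_P^*\oplus V_E^*$. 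With (i) and (ii) in hand, modding $V^*$ by $H$ replaces the smoothing-parameter coordinate by its square (from $\varphi^3$) and leaves $V_E^*$ unchanged, so $\bar\varphi$ acts on $V^*/H$ with eigenvalues $(\zeta_6^2,\zeta_6^2,1,\dots,1)$, giving $\operatorname{age}(\bar\varphi) = \tfrac{2}{3} < 1$. Since $G/H$ has no quasi-reflections on $V^*/H$, Proposition \ref{Reid-Tai OG} forces $V^*/G$ to be non-canonical at the origin.

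The main obstacle is step (i): a priori, $\varphi$ or $\varphi^2$ could be written as a product of quasi-reflections in $G$ and belong to $H$. I would rule this out via the shape of the eigenvalue tuple: $\varphi^k$ has two non-trivial eigenvalues $(\zeta_6^k,\zeta_6^{2k})$ on $V_P^*\oplus V_E^*$ for each $k\in\{1,2,4,5\}$, so expressing $\varphi^k$ as a product of quasi-reflections in $G$ would require a quasi-reflection acting non-trivially on only one of $V_P^*,V_E^*$ and coming from $\mathrm{Aut}(E,P)$. Since every non-trivial element of $\mathrm{Aut}(E,P)\simeq\mathbb{Z}/6$ acts non-trivially on both factors with the single exception of $\varphi^3$, no such quasi-reflection exists. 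This closes the argument.
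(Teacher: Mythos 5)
Your first inclusion ($\Mbar{3}{n}\setminus J_0 \subseteq$ canonical locus) matches the paper exactly. For the reverse inclusion your route is genuinely different, and this is where the trouble lies: you try to certify non-canonicity directly at \emph{every} point of $J_0$, working with the full automorphism group $G$. This forces you to control \emph{all} quasi-reflections in $G$, and the two assertions you flag, (i) that $H\cap\langle\varphi\rangle=\langle\varphi^3\rangle$ and (ii) that every quasi-reflection outside $\langle\varphi^3\rangle$ acts trivially on $V_P^*\oplus V_E^*$, are left unproved and are not immediate. For (ii) in particular: a quasi-reflection $\psi$ restricting to a nontrivial element of $\mathrm{Aut}(E,P)$ need not act nontrivially on $V_P^*$ --- the action of $\psi$ on the smoothing parameter is the tensor product of the tangent actions at $P$ on the $E$-side and the $C'$-side, and these can cancel. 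To rule out such $\psi$ you would additionally need to argue that its nontrivial restriction to $(C',p_1,\dots,p_n,P)$ necessarily moves some coordinate of $V_{C'}^*$ (effectivity of the Kuranishi family), which you don't say. Your sketch for (i) has the same issue: it silently assumes that a quasi-reflection with nontrivial eigenvalue on $V_P^*$ or $V_E^*$ must ``come from $\mathrm{Aut}(E,P)$''.

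The paper sidesteps all of this by proving non-canonicity only at a \emph{general} point of $J_0$: choose $C'$ smooth with $\mathrm{Aut}(C',p_1,\dots,p_n,P)$ trivial, so that $G=\mathrm{Aut}(C,p_1,\dots,p_n)$ is exactly cyclic of order $6$ and there is nothing to check about extraneous quasi-reflections --- Case 1 of Example \ref{ex:j=0} applies verbatim and gives a junior element of $G/\langle\varphi^3\rangle$. One then concludes for all of $J_0$ because the non-canonical locus is closed and $J_0$ is irreducible. You should adopt this reduction: it removes exactly the part of your argument that is incomplete, and is also what makes the proof a one-liner once Example \ref{ex:j=0} is in place.
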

\begin{proof}
    By Proposition \ref{prop:lifting forms local version} we see that $\overline{\mathcal{M}}_{3,n}\backslash J_0$ has canonical singularities. Let
    \[
    (C,p_1,...,p_n)=(C',p_1,...,p_n,p)\cup_p (E,p)
    \]
    be a pointed curve with an elliptic tail $E$ of $j$-invariant $0$ and suppose that $C'$ is smooth and $(C',p_1,...,p_n,p)$ has no non-trivial automorphisms.
    Then $\operatorname{Aut}(C,p_1,...,p_n)$ is cyclic of order $6$ and a generator acts on $\coho{0}{\Omega_C \otimes \omega(p_1+...+p_n)}$ with eigenvalues $\zeta_6,\zeta_6^2,1,...,1$. It follows from Example \ref{ex:j=0} that $\coho{0}{\Omega_C \otimes \omega(p_1+...+p_n)}^*/\operatorname{Aut}(C,p_1,...,p_n)$ has a non-canonical singularities. This proves that the general point of $J_0$ is a non-canonical singularity. The same must then be true for all points of $J_0$, because the non-canonical locus is closed.
    \end{proof}

\section{Rigid component of the canonical divisor}\label{sec:rigid component}
In this section we prove Proposition \ref{prop:rigid component}.
Our strategy is based on the following standard way to identify rigid components, see for example \cite[Lemma 6.4]{BMS}

\begin{lemma}\label{lemma:identifying rigid components}
    Suppose $X$ is a normal $\qq$-factorial variety, $D$ an effective divisor on $X$ and $E \subseteq X$ an irreducible divisor. Suppose that there exists a curve class $\gamma \in \mathrm{N}_1(X)$ such that irreducible curves with class $\gamma$ cover a Zariski dense subset of $E$ and such that
\[
\begin{array}{c}
    \gamma\cdot D < 0\\
    \gamma\cdot E<0.
\end{array}
\]
Then $\frac{\gamma\cdot D}{\gamma\cdot E}E$ is a rigid component of $D$. In particular,
\[
\coho{0}{X,\mathcal{O}_X(D)} = \coho{0}{X,\mathcal{O}_X\left(D - \frac{\gamma\cdot D}{\gamma\cdot E}E\right)}
\]
\end{lemma}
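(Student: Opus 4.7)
The plan is to prove the stronger statement that $E$ appears with multiplicity at least $c := (\gamma\cdot D)/(\gamma\cdot E)$ in every effective divisor linearly equivalent to $D$. Since both intersection numbers are negative, $c$ is a positive rational. The stated identity of global sections then follows by the usual identification: a section of $\mathcal{O}_X(D)$ corresponds to an effective divisor $D' \sim D$, and the multiplicity bound forces $D' - cE$ to still be effective, so $D'$ comes from a section of $\mathcal{O}_X(D - cE)$ via multiplication by the canonical section of $\mathcal{O}_X(cE)$.

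To establish the multiplicity bound, I would take any effective divisor $D' \sim D$ and write $D' = aE + D''$ with $a \geq 0$ the multiplicity of $E$ in $D'$ and $D''$ an effective divisor not containing $E$ in its support. The key step is to exhibit an irreducible curve $C \subset E$ of class $\gamma$ that is not contained in $\mathrm{Supp}(D'')$. Since $\mathrm{Supp}(D'') \cap E$ is a proper closed subset of $E$ (as $E \not\subset \mathrm{Supp}(D'')$) while the curves of class $\gamma$ sweep out a dense subset of $E$, a general such curve does the job. For this $C$ the intersection $C \cdot D''$ is proper, so $C \cdot D'' \geq 0$, and
\[
\gamma \cdot D \;=\; \gamma \cdot D' \;=\; a(\gamma \cdot E) + C \cdot D'' \;\geq\; a(\gamma \cdot E).
\]
Dividing by $\gamma \cdot E < 0$ reverses the inequality and yields $a \geq c$, as required.

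The main subtlety, rather than a genuine obstacle, is handling $\qq$-coefficients: $c$ need not be an integer, and $X$ is only $\qq$-factorial, so strictly one replaces $D$ by an integral multiple that is Cartier and runs the argument with integer multiplicities throughout, or equivalently phrases everything in terms of effective $\qq$-divisors. Either formulation delivers the conclusion without further complication, and the argument uses neither smoothness nor projectivity of $X$ beyond what is already built into the hypotheses.
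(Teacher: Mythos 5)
Your proof is correct and is exactly the standard argument behind this lemma; the paper itself gives no proof, simply citing \cite[Lemma 6.4]{BMS}, and your write-up (including the choice of a curve of class $\gamma$ avoiding $\operatorname{Supp}(D'')$ and the caveat about $\qq$-coefficients) fills in that reference faithfully. Note that in the paper's application the coefficient $\frac{\gamma\cdot D}{\gamma\cdot E}=4m$ is an integer, so the rounding issue you flag does not arise there.
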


We will use the curve class $\gamma \in \mathrm{N}_1\left( \Mbar{3}{n} \right)$ obtained by gluing a fixed stable pointed curve to a variable elliptic tail. More precisely, fix a stable pointed curve $[C',p_1,...,p_{n+1}] \in \Mbar{2}{n+1}$ and consider the morphism
\[
\theta:\Mbar{1}{1} \lra \Mbar{3}{n}
\]
that maps $[E,p]$ to $[C,p_1,...,p_n]$ where $C$ is obtained from gluing $p \in E$ to $p_{n+1} \in C'$. We define $\gamma \in \rm{N}_1\left( \Mbar{3}{n} \right)$ as the class of the image of $\theta$.

Let $\rho : \mathcal{X} \lra \Mbarstack{g}{n}$ be the universal curve and let $S_i$ be the image of the $i$-th marking.
The first Mumford class $\kappa_1$ is defined as\footnote{We follow the conventions of \cite[Chapter 13]{ACG}.}
\[
\kappa_1 = \rho_*\left( c_1\Big(\omega_\rho\big(\sum_{i=1}^n S_i\big)\Big)^2 \right).
\]
and equals
\begin{equation}\label{eq:Mumfords relation}
    \kappa_1 = 12\lambda + \psi_1 + ... + \psi_n -\delta,
\end{equation}
see \cite[Chapter 13, Theorem 7.6]{ACG}.
\begin{lemma}\label{lemma:intersection numbers elliptic tail curves}
    The intersection numbers of $\gamma$ with the divisors $\kappa_1,\psi_i,\delta_{irr}$ and $\delta_{i,S}$ are given as follows. We have
    \[
    \begin{array}{rl}
        \gamma \cdot \kappa_1 =& \frac{1}{12} \\
        \gamma \cdot \delta_{irr} =& 1 \\
        \gamma \cdot \delta_{1,\emptyset} =& -\frac{1}{12},
    \end{array}
    \]
    and all the other intersection numbers are $0$.
\end{lemma}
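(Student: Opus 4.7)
The plan is to pull back each class in the standard basis along $\theta$ and compute its degree on $\Mbar{1}{1}$, using $\gamma\cdot D = \deg_{\Mbar{1}{1}}\theta^*D$; the intersection with $\kappa_1$ then follows from Mumford's relation \eqref{eq:Mumfords relation}.

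First I dispatch the easy pullbacks. The family of stable $n$-pointed genus-$3$ curves over $\Mbar{1}{1}$ pulled back by $\theta$ is obtained by gluing the universal elliptic curve to the trivial family with fiber $(C',p_1,\dots,p_{n+1})$ along the sections $[p]$ and $[p_{n+1}]$. Its Hodge bundle decomposes as a direct sum of the Hodge bundle of the universal elliptic family and the constant summand $\coho{0}{C',\omega_{C'}}$, so $\theta^*\lambda=\lambda_{\Mbar{1}{1}}$, which has degree $\tfrac{1}{12}$. Each $p_i$ lies on the fixed component $C'$, so the associated section is constant and $\theta^*\psi_i=0$. A glance at the topological types of the fibers shows that the image of $\theta$ lies entirely in $\Delta_{1,\emptyset}$ and is disjoint from every other $\Delta_{i,S}$, whereas it meets $\Delta_{irr}$ transversely at the single point of $\Mbar{1}{1}$ corresponding to the nodal cubic. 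Hence $\gamma\cdot\delta_{irr}=1$ and $\gamma\cdot\delta_{i,S}=0$ for all $(i,S)\neq(1,\emptyset)$.

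Only the self-intersection $\gamma\cdot\delta_{1,\emptyset}$ remains. Writing $\xi:\Mbar{1}{1}\times\Mbar{2}{n+1}\to\Mbar{3}{n}$ for the boundary gluing map, we have $\theta=\xi\circ(\operatorname{id}\times\{[C',p_1,\dots,p_{n+1}]\})$. The standard normal-bundle formula (see \cite[Ch.~17]{ACG}) gives $\xi^*\delta_{1,\emptyset}=-\psi-\psi'$, where $\psi,\psi'$ are the cotangent classes at the two branches of the separating node. The $\psi'$-factor pulls back to $0$ since the $\Mbar{2}{n+1}$-coordinate is constant, while $\psi$ pulls back to the $\psi$-class on $\Mbar{1}{1}$, which coincides with $\lambda_{\Mbar{1}{1}}$. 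Hence $\theta^*\delta_{1,\emptyset}=-\lambda_{\Mbar{1}{1}}$ and $\gamma\cdot\delta_{1,\emptyset}=-\tfrac{1}{12}$. Substituting into Mumford's relation yields $\gamma\cdot\kappa_1=12\cdot\tfrac{1}{12}-(1-\tfrac{1}{12})=\tfrac{1}{12}$, completing the table.

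The delicate step is pinning down the constant in $\xi^*\delta_{1,\emptyset}=-\psi-\psi'$: the factor $\tfrac12$ in the convention $\delta_{1,\emptyset}=\tfrac12[\Delta_{1,\emptyset}]$ and the generic $\mathbb{Z}/2$-stabilizer along $\Delta_{1,\emptyset}$ (the elliptic involution, acting as a quasi-reflection on $\Mbar{3}{n}$) conspire to yield exactly the same form as for the other boundary divisors. I would verify this either by a direct local computation near a generic separating node, working in the charts provided by the smoothing parameter $\tau_P$ from Subsection~\ref{subsec:local description Mgn}, or by invoking the parallel computation in \cite[Ch.~17]{ACG}.
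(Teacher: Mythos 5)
Your proof is correct and follows the same basic strategy as the paper (compute $\gamma\cdot D=\deg_{\Mbar{1}{1}}\theta^*D$ and then use the standard relations on $\Mbar{1}{1}$), but you take a slightly different route on two points. First, the paper pulls back $\kappa_1$ directly, citing \cite[Chapter 17, Lemma 4.38]{ACG} for $\theta^*\kappa_1=\kappa_1$, whereas you compute $\theta^*\lambda=\lambda_{\Mbar{1}{1}}$ and recover $\gamma\cdot\kappa_1$ via Mumford's relation \eqref{eq:Mumfords relation}; both are clean, but the paper's route skips the auxiliary $\lambda$ intersection entirely. Second, the paper leaves the boundary pull-backs entirely to the citation, while you re-derive them from the normal-bundle formula $\xi^*\delta_{1,\emptyset}=-\psi-\psi'$ together with the observation that the $\frac12$ convention and the generic $\mathbb{Z}/2$-stabilizer cancel; this is more work but makes the argument self-contained and correctly isolates the one genuinely delicate point. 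Your reasoning on that point is sound: the elliptic involution acts as $\operatorname{diag}(-1,1,\dots,1)$ on the deformation space, so the coarse Weil divisor $[\Delta_{1,\emptyset}]$ pulls back to twice the stacky boundary divisor, and dividing by $2$ restores the usual normal-bundle formula. One small stylistic note: since the lemma's statement does not ask for $\gamma\cdot\lambda$, the direct pull-back $\theta^*\kappa_1=\kappa_1$ (which ACG supplies) is the more economical choice, but your detour via $\lambda$ is equally valid and makes the role of Mumford's relation explicit.
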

\begin{proof}
    We use the fact that $\gamma \cdot D = \deg(\theta^*D)$ for any $D \in \operatorname{Pic}_\mathbb{Q}\left(\Mbar{3}{n}\right)$.
    The formulas in \cite[Chapter 17, Lemma 4.38]{ACG} become
    \[
    \begin{array}{rl}
        \theta^*\kappa_1 =& \kappa_1 \\
        \theta^*\delta_{irr} =& \delta_{irr} \\
        \gamma \cdot \delta_{1,\emptyset} =& -\psi_1,
    \end{array}
    \]
    and the other elements of the basis are pulled back to zero. On $\Mbar{1}{1}$ there are the relations $12\kappa_1 = 12\psi_1 = \delta_{irr}$ and $\delta_{irr}$ is the divisor corresponding to a point. Therefore $\deg(\kappa_1) = \deg(\psi_1) = \frac{1}{12}$ and $\deg(\delta_{irr}) = 1$ and we are done.
\end{proof}

Note that by Lemma \ref{lemma:intersection numbers elliptic tail curves} the class $\gamma$ is independent of the choice of $[C',p_0,...,p_n]$ and therefore irreducible curves with class $\gamma$ cover the boundary divisor $\Delta_{1,\emptyset}$.

Equation \eqref{eq:Mumfords relation} allows us to rewrite the canonical divisor on $\Mbar{3}{n}$ as
\begin{equation}\label{eq:canonical divisor in kappa basis}
    K_{\Mbar{3}{n}} = \frac{13}{12}\kappa_1 - \frac{1}{12}(\psi_1+...+\psi_n) - \frac{11}{12}\delta - \delta_{1,\emptyset}.
\end{equation}

\begin{proof}[Proof of Proposition \ref{prop:rigid component}.]
    We apply Lemma \ref{lemma:identifying rigid components} to the curve class $\gamma$ and the divisors $mK_{\Mbar{3}{n}}$ and $\Delta_{1,\emptyset}$. The intersection numbers are easily computed with Lemma \ref{lemma:intersection numbers elliptic tail curves} and Equation \eqref{eq:canonical divisor in kappa basis}:
    \[
    \begin{array}{rl}
        \gamma \cdot mK_{\Mbar{3}{n}} =& -\frac{2m}{3} \\
        \gamma \cdot \Delta_{1,\emptyset} =& -\frac{1}{6}.
    \end{array}
    \]
    This proves the proposition.
\end{proof}

\section{Bigness of the canonical class of $\Mbar{3}{n}$}\label{sec:K is big}
In this section, we prove Theorem \ref{Main thm: K is big}. Concretely, when $n \geq 15$, we will express the canonical class as a positive linear combination of the Hodge class $\lambda$, the $\psi$-classes $\psi_1,...,\psi_n$, pull backs by forgetful maps of effective divisors considered in \cite{HM,Fa1}, and boundary divisors.
We will work with divisors on $\Mbar{g}{n}$ that are invariant under the action of the symmetric group $S_n$, and it will be convenient to define
\[
\psi = \psi_1 + ... + \psi_n,\,\,\,\, \delta_{i,k} = \sum_{\tiny{\begin{array}{c}
    S \subseteq \set{1,...,n} \\
    |S| = k
\end{array}}} \delta_{i,S}.
\]
Consider the forgetful morphism $\pi: \overline{\mathcal{M}}_{g,n+1} \lra \overline{\mathcal{M}}_{g,n}$ that forgets the last marked point.
The pull-back
\[
\pi^*:\operatorname{Pic}_\mathbb{Q}\left(\Mbar{g}{n}\right) \lra \operatorname{Pic}_\mathbb{Q}\left(\Mbar{g}{n+1}\right)
\]
is given by \cite[Chapter 17, Lemma 4.28]{ACG}:
\begin{equation}\label{eq:formulas pull back forgetful map}
    \begin{cases}
    \pi^*\lambda = \lambda,\\
        \pi^*\delta_{irr} = \delta_{irr},\\
        \pi^*\psi_j = \psi_j - \delta_{0,\set{j,n+1}} \text{ for } j=1,...,n,\\
        \pi^*\delta_{i,S} = \begin{cases}
            \delta_{i,S} \text{ if } n=0, i=\frac{g}{2}, S=\emptyset,\\
            \delta_{i,S} + \delta_{i,S \cup \{n+1\}} \text{ otherwise}.
        \end{cases}\\
\end{cases}
\end{equation}
It will be useful in computations to work with the classes $\omega_1,...,\omega_n,\omega \in \operatorname{Pic}_\mathbb{Q}\left(\Mbar{g}{n}\right)$ defined by
\[
\omega_j = \pi_j^*\psi_1 \text{ for } j=1,...,n \text{ and } \omega = \sum_{j=1}^n \omega_j,
\]
where $\pi_j:\Mbar{g}{n} \lra \Mbar{g}{1}$ forgets all points except the $j$-th marked point.
Observe that, by induction on $n$, we have that
\[
\omega_j = \psi_j - \sum_{S \ni j} \delta_{0,S} \text{ for } j=1,...,n\,\,\, \text{ and }\,\,\, \omega = \psi - \sum_{k=2}^n k\delta_{0,k},
\]
where the first sum runs over all subsets $S \subseteq \{1,...,n\}$ containing $j$ and at least one other element.\\
Let $\overline{H}$ be the divisor in $\mbar_3$ defined as the closure of the locus of hyperelliptic curves $H \subseteq \mathcal{M}_3$. The class of $\overline{H}$ in $\operatorname{Pic}_\mathbb{Q}\left( \mbar_3 \right)$ was computed in \cite[Section 6]{HM}:
\[
\overline{H} = 9\lambda - \delta_{irr} -3\delta_{1}.
\]
Let $g,r \geq 1$ and set $n=(2r+1)(g+1)$.
Let $\overline{D}_{g,n}^r$ be the divisor in $\Mbar{g}{n}$ defined as the closure in $\Mbar{g}{n}$ of the locus
\[
D_{g,n}^r = \setcondition{[C,p_1,...,p_n] \in \M{g}{n}}{\operatorname{h}^0\left(C,\omega_C^{\otimes(r+1)}(-p_1-...-p_n)\right) \geq 1} \subseteq \M{g}{n}.
\]
The divisor $\overline{D}_{g,n}^r$ was introduced by Farkas, who denoted it by $\mathfrak{Mrc}_{g,0}^r$, and computed the class \cite[Theorem 4.2]{Fa1}:
\begin{equation}\label{eq:Farkas divisor}
    \overline{D}_{g,n}^r = -(6r^2+6r+1)\lambda + (r+1)\omega + \binom{r+1}{2} \cdot \delta_{irr} -  \delta_{0,2} - \BD.
\end{equation}
Finally, recall from \cite[Theorem 2.9]{Lo} that the class
\[
a\lambda + b\psi \in \operatorname{Pic}_\mathbb{Q}\left(\Mbar{g}{n}\right)
\]
is big for $a,b > 0$.

\begin{proof}[\textit{Proof of Theorem \ref{Main thm: K is big}}]
Let $n \geq 15$. We consider two effective divisors on $\overline{\mathcal{M}}_{3,n}$.
The first one is the pull-back of $\overline{H}$ under the forgetful map $\pi:\Mbar{3}{n} \lra \mbar_3$:
\begin{equation}
\pi^*\overline{H} = 9\lambda - \delta_{irr} - 3\delta_1.
\end{equation}
The second one is the \textit{symmetric pull-back} of $\overline{D}_{3,14}^3$ to $\Mbar{3}{n}$, defined as
\[
\overline{D}_n = \frac{1}{\binom{n}{14}}\sum_{\tiny\begin{array}{c}
    T \subseteq \set{1,...,n} \\
    |T| = 14
\end{array}} \pi_T^* \overline{D}_{3,14}^3,
\]
where $\pi_T:\Mbar{3}{n} \lra \Mbar{3}{14}$ is the forgetful map that forgets all points except for those indexed by $T$.
Equation \eqref{eq:Farkas divisor} becomes
\[
\overline{D}_{3,14}^3 = -73\lambda + 4\omega + 6\delta_{irr} - \delta_{0,2} - \BD,
\]
and using the formulas for the pull back \eqref{eq:formulas pull back forgetful map} we obtain that
\begin{equation}
    \begin{array}{rl}
    \overline{D}_n = & -73\lambda + \frac{56}{n}\omega + 6\delta_{irr} - \frac{\binom{14}{2}}{\binom{n}{2}}\delta_{0,2} - \BD \\
    = & -73\lambda + \frac{56}{n}\left(\psi - \sum_{k=2}^n k\delta_{0,k} \right) + 6\delta_{irr} - \frac{182}{n(n-1)} \delta_{0,2} - \BD\\
    = & -73\lambda + \frac{56}{n}\psi + 6\delta_{irr} - \frac{182}{n(n-1)}\delta_{0,2} - \sum_{k=2}^n \frac{56k}{n}\delta_{0,k} - \BD.
\end{array}
\end{equation}
By equation \eqref{eq:formula canonnical class}, the canonical class on $\Mbar{3}{n}$ equals
\[
\begin{array}{rl}
    K_{\overline{\mathcal{M}}_{3,n}}= & 13\lambda + \psi - 2\delta_{irr} - 2 \delta_0 - 2\delta_1 -  \delta_{1,\emptyset} \\
    = & 13\lambda + \psi - 2\delta_{irr} - 2 \delta_0 - 3\delta_1 + \BD.
\end{array}
\]
Set $t = \frac{n}{56}$ and $s = \frac{13+73t}{9}$. These numbers have been chosen so that the coefficients of $\lambda$ and $\psi$ in $s\pi^*\overline{H} + t\overline{D}_n$ and $K_{\Mbar{g}{n}}$ are equal.
Then $K_{\overline{\mathcal{M}}_{3,n}} - s\pi^*\overline{H}- t\overline{D}_n$ equals
\begin{equation}\label{eq:canonical - effective}
    \left(s - 6t -2\right)\delta_{irr} + (3s-3)\delta_{1} + \frac{182t}{n(n-1)}\delta_{0,2} + \sum_{k=3}^n (k-2)\delta_{0,k} + \BD.
\end{equation}
Since $n \geq 15$, the coefficients in front of $\delta_{irr},\delta_1$ and $\delta_{0,k}$ in \eqref{eq:canonical - effective} are positive. It follows that for $\epsilon > 0$ small enough
\[
 K_{\overline{\mathcal{M}}_{3,n}} - (s-9\epsilon)\pi^*\overline{H} - (t-\epsilon)\overline{D}_n = 8\epsilon\lambda + \frac{56\epsilon}{n} \psi + \BD.
\]
Thus $K_{\Mbar{3}{n}}$ is a sum of big and effective classes, which proves that $K_{\Mbar{3}{n}}$ is big.\\
\end{proof}

\end{document}